\documentclass[final,leqno,showlabe]{siamltex}
\usepackage{amsmath}
\topmargin=-1.0cm
\usepackage{stmaryrd}
\usepackage{booktabs}
\usepackage{cite}
\usepackage{amssymb}
\usepackage{verbatim}
\usepackage[all,cmtip]{xy}
\usepackage[top=3.54cm,bottom=4.54cm,left=2.94cm,right=2.94cm ]{geometry}

\input psfig.sty

\newcommand{\bx}{{\bf x} }

\newcommand{\bee}{{\bf e} }
\newcommand{\bez}{{\bf z}}

\newcommand{\calF}{{\mathcal{F}} }
\newcommand{\p}{\partial}
\newcommand{\eps}{\varepsilon}
\newcommand{\sinc}{{\text{sinc}}}

\newtheorem{rmk}{Remark}[section]
\newcommand{\be}{\begin{equation}}
\newcommand{\ee}{\end{equation}}
\newcommand{\ba}{\begin{array}}
\newcommand{\ea}{\end{array}}
\newcommand{\bea}{\begin{eqnarray}}
\newcommand{\eea}{\end{eqnarray}}
\newcommand{\beas}{\begin{eqnarray*}}
\newcommand{\eeas}{\end{eqnarray*}}

\title{A uniformly accurate (UA) multiscale time integrator pseudospectral method  for the Dirac equation in the nonrelativistic limit regime\thanks{This work was partially support by  the Ministry of Education of Singapore grant R-146-000-196-112 (W. Bao and X. Jia), by NSF grants DMS-1217066 and DMS-1419053 (Y. Cai)
and by the ANR project BECASIM ANR-12-MONU-0007-02 (Q. Tang).}}
\author{Weizhu Bao\thanks{Department of Mathematics, National University of Singapore, Singapore
119076 ({\tt matbaowz@nus.edu.sg},
URL: http://www.math.nus.edu.sg/\~{}bao/)}
\and Yongyong Cai\thanks{Beijing Computational Science Research Center,
   Beijing 100094, China; and Department of Mathematics,
Purdue University, West Lafayette, IN 47907, USA ({\tt yongyong.cai@gmail.com})}
\and Xiaowei Jia\thanks{Department of Mathematics,  National University of
Singapore, Singapore 119076 ({\tt A0068124@nus.edu.sg})}
\and qinglin Tang\thanks{Institut Elie Cartan de Lorraine, Universit\'e de Lorraine, Inria Nancy-Grand Est,
F-54506 Vandoeuvre-l\`es-Nancy Cedex, France
({\tt tqltql2010@gmail.com})}}
\date{}
\begin{document}

\maketitle

\begin{abstract}
We propose and rigourously analyze  a multiscale time integrator Fourier
pseudospectral (MTI-FP) method  for the (linear) Dirac equation
with a  dimensionless parameter $\eps\in(0,1]$ which is inversely proportional to the speed of light.
In the nonrelativistic limit regime, i.e. $0<\eps\ll 1$, the solution exhibits
highly oscillatory  propagating waves
with wavelength $O(\varepsilon^2)$ and $O(1)$ in time and space, respectively.
 Due to the rapid temporal oscillation, it is quite challenging in
designing and analyzing numerical methods with  uniform error bounds in $\eps\in(0,1]$.
We present the  MTI-FP method based on properly adopting a
multiscale decomposition of the solution of the Dirac equation
and applying the exponential wave integrator with appropriate numerical quadratures.
By a careful study of the error propagation and using the energy method, we
establish two independent error estimates via two different mathematical
approaches as $h^{m_0}+\frac{\tau^2}{\eps^2}$ and $h^{m_0}+\tau^2+\eps^2$,
where $h$ is the mesh size, $\tau$ is  the time step and $m_0$ depends on the regularity
of the solution. These two error bounds immediately imply that the MTI-FP method converges uniformly and optimally
in space with exponential convergence rate if the solution is smooth, and uniformly
in time with linear convergence rate at $O(\tau)$ for all $\eps\in(0,1]$ and optimally
with quadratic convergence rate at  $O(\tau^2)$ in the regimes when either $\eps=O(1)$
or $0<\eps\lesssim \tau$. Numerical results are reported to demonstrate that our error estimates
are optimal and sharp. Finally, the MTI-FP method is applied to study numerically the convergence
rates of the solution of the Dirac equation to those of its limiting models when $\eps\to0^+$.
\end{abstract}


\begin{keywords}
Dirac equation, nonrelativistic limit regime, uniformly accurate, multiscale time integrator,
exponential wave integrator,  spectral method, error bound
\end{keywords}

\section{Introduction}\setcounter{equation}{0}
Quantum mechanics and relativity theory were the two major
physics discoveries in the last century. The first successful
attempt to consistently integrate these two fundamental theories was
made by the British physicist Paul Dirac in 1928 \cite{Dirac1,Dirac2}, resulting in the equation
known as the Dirac equation. It describes  the evolution of
spin-$1/2$ massive particles, such as electrons and quarks.
It is a relativistic version of the Schr\"odinger equation for quantum
mechanics, consistent with Albert Einstein's special relativity.
Dirac's theory led to the rigorous explanation for the fine structure of the hydrogen spectrum
and the prediction of the antimatter \cite{Ad}, and predated the experimental discovery of positron.
In different parameter limits, the Dirac equation collapses to the Pauli equation,
the Schr\"{o}dinger equation, and the Weyl equation, respectively.
Since the first experimental realization of graphene in 2003
\cite{AMPGMKWTNLG,NGMJZDGF},
much attention has been drawn to the study of the structures and/or  dynamical
properties of graphene and graphite as well as two dimensional (2D) materials \cite{NGPNG},
in which the Dirac equation plays an important role. This remarkable experimental advance renewed the
interest on the theoretical analysis and numerical simulation
of the Dirac equation and/or the (nonlinear) Schr\"{o}dinger equation without/with
external potentials, especially the honeycomb lattice potential \cite{AZ, FW,FW2}.

After proper nondimensionlization and dimension reduction,
the (linear) Dirac equation for the spin-$1/2$ particles
with external electromagnetic potential in $d$ ($d=3,2,1$) dimensions  reads \cite{Ad,BCJ,BHM,Dirac1,Dirac2,Sch,NSG,Hun}:
\be
\label{SDEd}
i\partial_t\Psi(t,\bx)=\left[-\frac{i}{\eps}\sum_{j=1}^{d}\alpha_j
\partial_j+\frac{1}{\eps^2}\beta+V(t,\bx)I_4-\sum_{j=1}^{d}A_j(t,\bx)\alpha_j\right]
\Psi(t,\bx), \quad \bx\in{\mathbb R}^d,
\ee
with the initial data
\be\label{SDEd-init}
\Psi(t=0,\bx)=\Psi_0(\bx),\qquad \bx\in{\mathbb R}^d,
\ee
where $i=\sqrt{-1}$,  $t$ is time, $\bx=(x_1,\ldots,x_d)^T\in {\mathbb R}^d$
is the spatial coordinate vector, $\partial_k=\frac{\partial}{\partial x_k}$ ($k=1,\ldots,d$),
 $\Psi :=\Psi(t,\bx)=(\psi_1(t,\bx),\psi_2(t,\bx), \psi_3(t,\bx), \psi_4(t,\bx))^T\in\mathbb{C}^4$
 is the complex-valued vector
wave function of the ``spinorfield'', $\eps\in(0,1]$ is a dimensionless parameter
inversely proportional to the speed of light.
 $I_n$ is the $n\times n$ identity matrix for $n\in {\mathbb N}$,
$V:=V(t,\bx)$ is the real-valued electrical potential and
${\bf A}:={\bf A}(t,\bx)=(A_1(t,\bx), \ldots A_d(t,\bx))^T$ is the real-valued magnetic potential vector.
In addition, the $4\times 4$
matrices $\alpha_1$, $\alpha_2$, $\alpha_3$ and $\beta$ are defined as
\be \label{alpha}
\alpha_1=\left(\begin{array}{cc}
\mathbf{0} & \sigma_1  \\
\sigma_1 & \mathbf{0}  \\
\end{array}
\right),\qquad
\alpha_2=\left(\begin{array}{cc}
\mathbf{0} & \sigma_2 \\
\sigma_2 & \mathbf{0} \\
\end{array}
\right), \qquad
\alpha_3=\left(\begin{array}{cc}
\mathbf{0} & \sigma_3 \\
\sigma_3 & \mathbf{0} \\
\end{array}
\right),\qquad
\beta=\left(\begin{array}{cc}
I_{2}& \mathbf{0} \\
\mathbf{0} & -I_{2} \\
\end{array}
\right),
\ee
 where $\sigma_1$, $\sigma_2$,  $\sigma_3$
are the Pauli matrices  given by
\be\label{Paulim}
\sigma_{1}=\left(
\begin{array}{cc}
0 & 1  \\
1 & 0  \\
\end{array}
\right), \qquad
\sigma_{2}=\left(
\begin{array}{cc}
0 & -i \\
i & 0 \\
\end{array}
\right),\qquad
\sigma_{3}=\left(
\begin{array}{cc}
1 & 0 \\
0 & -1 \\
\end{array}
\right).
\ee

The Dirac equation \eqref{SDEd} with (\ref{SDEd-init}) conserves the total mass
\be\label{norm}
\|\Psi(t,\cdot)\|^2:=\int_{{\mathbb R}^d}|\Psi(t,\bx)|^2\,d\bx=
\int_{{\mathbb R}^d}\sum_{j=1}^4|\psi_j(t,\bx)|^2\,d\bx\equiv \|\Psi(0,\cdot)\|^2
=\|\Psi_0\|^2, \qquad t\ge0.
\ee
In addition, if the external electromagnetic potentials are time independent,
i.e. $V(t,\bx)=V(\bx)$ and $A_j(t,\bx)=A_j(\bx)$ ($1\leq j\leq d$),
the Dirac equation \eqref{SDEd} with (\ref{SDEd-init}) conserves the energy
\be\label{engery60}
E(t):=\int_{\mathbb{R}^d}\left(-\frac{i}{\varepsilon}
\sum_{j=1}^d\Psi^*\alpha_j\partial_j\Psi+\frac{1}{\varepsilon^2}\Psi^*\beta\Psi+V(\bx)|\Psi|^2-
\sum_{j=1}^dA_j(\bx)\Psi^*\alpha_j\Psi\right)d\bx\equiv E(0),\quad t\ge0,
\ee
where $\Psi^*=\overline{\Psi^T}$ denotes the conjugate transpose of  $\Psi$.
In the nonrelativistic limit regime, i.e. $0<\eps\ll 1$, as proven in
\cite{White,Sch,NM,N,Mau,Hun,CC,BMP}, the solution
$\Psi$ of the Dirac equation \eqref{SDEd} can be split into the electron part and the positron part, i.e.,
\be\label{eq:MD:1st}
\Psi=e^{-it/\eps^2}\begin{pmatrix}\varphi_1\\ \varphi_2 \\0\\0\end{pmatrix}+
e^{it/\eps^2}\begin{pmatrix}0\\ 0 \\ \varphi_3\\ \varphi_4\end{pmatrix}+O(\eps)=
e^{-it/\eps^2}\Phi_e+e^{it/\eps^2}\Phi_p+O(\eps),
\ee
where both the `electron component' $\Phi_e$ and `positron component'
$\Phi_p$ satisfy the (different) Schr\"odinger equation \cite{Sch,BMP}.
In addition, a higher order $O(\eps^2)$ approximate model of the
Pauli-type equation was provided in \cite{NM,Mau}.
For details of the nonrelativistic limit of the Dirac equation
\eqref{SDEd}, we refer the readers to
\cite{Sch,BMP,NM,Mau} and references therein.

In practice, for lower dimensions $d=1,2$, the Dirac equation \eqref{SDEd}
can be split into two equivalent sets of decoupled equations with two components each \cite{BCJ}
and thus can be reduced to the following equation for
$\Phi:=\Phi(t,\bx)=(\phi_1(t,\bx),\phi_2(t,\bx))^T\in \Bbb C^2$ as
\be
\label{SDEdd}
i\partial_t\Phi(t,\bx)=\left[-\frac{i}{\eps}\sum_{j=1}^{d}\sigma_j
\partial_j+\frac{1}{\eps^2}\sigma_3+V(t,\bx)I_2-\sum_{j=1}^{d}A_j(t,\bx)\sigma_j\right]
\Phi(t,\bx), \quad \bx\in{\mathbb R}^d,
\ee
with the initial data
\be\label{SDEdd-init}
\Phi(t=0,\bx)=\Phi_0(\bx),\qquad \bx\in{\mathbb R}^d,\qquad d=1,2,
\ee
where   $\Phi=(\psi_1,\psi_4)^T$ (or $\Phi=(\psi_2,\psi_3)^T$  in one dimension (1D) and under the transformation
$x_2\to -x_2$ and $A_2\to -A_2$ in 2D).  As a result of its simplicity compared to \eqref{SDEd},
the Dirac equation \eqref{SDEdd} has been widely used when considering the
1D and 2D cases \cite{WT,XST,BCJ}.

Similarly, the Dirac equation \eqref{SDEdd} with (\ref{SDEdd-init}) conserves the total mass
\be\label{norm:1}
\|\Phi(t,\cdot)\|^2:=\int_{{\mathbb R}^d}|\Phi(t,\bx)|^2\,d\bx=
\int_{{\mathbb R}^d}\sum_{j=1}^2|\phi_j(t,\bx)|^2\,d\bx\equiv \|\Phi(0,\cdot)\|^2
=\|\Phi_0\|^2, \qquad t\ge0.
\ee
Furthermore, if the external electromagnetic potentials are time-independent,
i.e. $V(t,\bx)=V(\bx)$ and $A_j(t,\bx)=A_j(\bx)$ ($1\leq j\leq d$),
the Dirac equation \eqref{SDEdd} with (\ref{SDEdd-init})  conserves the energy
\be\label{engery601}
E(t):=\int_{\mathbb{R}^d}\left(-\frac{i}{\varepsilon}
\sum_{j=1}^d\Phi^*\sigma_j\partial_j\Phi+\frac{1}{\varepsilon^2}\Phi^*\sigma_3\Phi+V(\bx)|\Phi|^2-
\sum_{j=1}^dA_j(\bx)\Phi^*\sigma_j\Phi\right)d\bx\equiv E(0),\; t\ge0.
\ee
For the Dirac equation \eqref{SDEdd}, one can obtain  the nonrelativistic limit  which is similar
to  \eqref{eq:MD:1st} and the detail is omitted here for brevity \cite{Sch,BMP,NM,Mau}.

There have been extensive theoretical and numerical results for the Dirac equation \eqref{SDEd} (or
(\ref{SDEdd})) in the literatures. Along the analytical front,
time independent states and dynamical properties
have been  thoroughly investigated, such as the bound states  \cite{Esteban},
semi-classical limit \cite{GMMP} and nonrelativistic limit \cite{Sch,N,BMP}, etc.
Along the numerical front, various finite difference time domain (FDTD) methods
 \cite{Ant,XST, WT,Hamm0,Hamm}, time-splitting Fourier pseudospectral (TSFP) method \cite{BL,HJMSZ}
 and  Gaussian beam method \cite{WHJY} have been proposed to
 solve the Dirac equation \eqref{SDEd} (or \eqref{SDEdd}). However, most existing numerical methods
 are designed for the efficient and accurate simulation of the Dirac equation \eqref{SDEd}
 (or \eqref{SDEdd}) in the parameter regime $\eps=O(1)$.
 In fact, for the Dirac equation  in the nonrelativistic limit regime,
 i.e. $0<\eps\ll1$, based on the theoretical analysis
\cite{Hun, BMP,CC,NM,Mau,N,Sch,White}, the solution
exhibits propagating waves with wavelength $O(\varepsilon^2)$ and
$O(1)$ in time and space, respectively. This rapid oscillation
in time brings significant difficulties in designing and analyzing
numerical methods for the Dirac equation  \eqref{SDEd} (or \eqref{SDEdd}) when $0<\eps\ll1$.
Recently, we have rigorously analyzed and compared the frequently used FDTD methods and TSFP method
for the Dirac equation in the nonrelativistic limit regime \cite{BCJ} and
shown that the meshing strategy (or $\eps$-resolution) for the FDTD methods and TSFP method should be $h=O(\sqrt{\eps})$, $\tau=O(\eps^3)$ and $h=O(1)$, $\tau=O(\eps^2)$, respectively,
where $h$ is the mesh size and $\tau$ is the time step. Thus, the existing FDTD
and TSFP methods are capable to solve the Dirac equation \eqref{SDEd} (or \eqref{SDEdd})
efficiently and accurately in the regime $\eps=O(1)$, and are much less efficient
and time consuming in the nonrelativistic limit regime $0<\eps\ll1$.

The main aim of this paper is to design and analyze
an efficient and accurate numerical method for the Dirac equation \eqref{SDEd} (or (\ref{SDEdd}))
which is uniformly accurate (UA) for $\eps\in(0,1]$.
The key ingredients include  adopting a multiscale decomposition of
the solution of the Dirac equation \cite{Masmoudi} at each time interval
with proper transmission conditions at different time intervals and
 applying the exponential wave integrator (EWI) with appropriate numerical quadratures
which have been widely explored in solving highly oscillatory ordinary differential equations (ODEs) \cite{Gautschi0,Gautschi-type-3,Gautschi-type-5} and dispersive partial differential equations (PDEs) \cite{BC2,BD,BDZ}.
Then by a careful study of the error propagation and using the energy method, we
establish two independent error estimates via two different mathematical
approaches for the MTI-FP method
as $h^{m_0}+\frac{\tau^2}{\eps^2}$ and $h^{m_0}+\tau^2+\eps^2$ with $m_0$ depending
on the regularity of the solution.
Thus the MTI-FP method converges uniformly in space and time with
respect to $0<\eps\le1$. We remark here that a similar MTI-FP method
has been recently designed and analyzed for the nonlinear Klein-Gordon equation
in the nonrelativistic limit regime \cite{BCZ}.

The rest of the paper is organized as follows.
In section \ref{sec:MTI}, we introduce a  multiscale decomposition
for the Dirac equation \eqref{SDEdd} and design the MTI-FP method.
In section \ref{sec:error}, we establish rigorously error  estimates for the MTI-FP method.
Section \ref{sec:num} is devoted to the numerical results of the MTI-FP
method and convergence rates of the solution of the Dirac equation to its limiting models
in the nonrelativistic limit regime.
Finally, some conclusions are drawn in section \ref{sec:con}.
Throughout the paper, we adopt standard notations of Sobolev spaces and their norms, and use
the notation $p\lesssim q$ to represent that there exists a generic constant $C>0$,
which is independent of time step $\tau$, mesh size $h$ and $\eps$, such that $|p|\leq Cq$.

\section{The MTI-FP method}\label{sec:MTI}
For simplicity of notations, we shall only present our
method and analysis for the Dirac equation \eqref{SDEdd} in 1D,
while all the notations and results
can be easily generalized to \eqref{SDEdd} in higher dimensions (2D) and \eqref{SDEd} without any extra work. Denote
\be\label{eq:op:def}
\mathcal{T}=-i\eps\sigma_1\partial_x+\sigma_3,\quad W(t,x)=V(t,x)I_2-
A_1(t,x)\sigma_1,\quad x\in \Bbb R,
\ee
where the domain of the operator $\mathcal{T}$ is $(H^1(\Bbb R))^2$,
and
then the Dirac equation  \eqref{SDEdd} in 1D can be written as
\be
i\p_t\Phi(t,x)=\frac{1}{\eps^2}\mathcal{T}\Phi(t,x)+W(t,x)\Phi(t,x).
\ee
We note that $\mathcal{T}$ is diagonalizable in the phase space (Fourier domain) and can be decomposed as
\be
\mathcal{T}=\sqrt{I-\eps^2\Delta}\;\Pi_+-\sqrt{I-\eps^2\Delta}\;\Pi_-,
\ee
where $\Delta=\p_{xx}$ is the Laplace operator   in 1D and $I$ is the identity operator,   $\Pi_+$ and $\Pi_-$ are projectors defined as
\be
\Pi_+=\frac12\left[I_2+\left(I-\eps^2\Delta\right)^{-1/2}\mathcal{T}\right],\quad \Pi_-=\frac12\left[I_2-\left(I-\eps^2\Delta\right)^{-1/2}\mathcal{T}\right].
\ee
Here $\sqrt{I-\eps^2\Delta}$ is understood in the Fourier space by the symbol $\sqrt{1+\eps^2\xi^2}$ ($\xi\in\Bbb R$) with domain $H^1(\Bbb R)$. When $\Delta$, $I$ and $\sqrt{I-\eps^2\Delta}$ act on vector function $\Phi=(\phi_1,\phi_2)^T$, we mean that $\Delta$, $I$ and $\sqrt{I-\eps^2\Delta}$ act on two components $\phi_1$ and $\phi_2$.
It is easy to verify that $\Pi_++\Pi_-=I_2$ and $\Pi_+\Pi_-=\Pi_-\Pi_+={\bf 0}$, $\Pi_{\pm}^2=\Pi_{\pm}$. In addition, $\mathcal{T}$ and $\Pi_{\pm}$ can be easily
calculated in Fourier domain.

As will be shown in the subsequent discussion,
this decomposition of $\mathcal{T}$ is
the key step for designing the uniformly accurate numerical scheme. The idea is to decouple the solutions into the eigenspaces of  $\mathcal{T}$ in phase space
 by applying the projectors $\Pi_{\pm}$, and  deal with the projections separately.
 It is worth noticing that the above formulation is in the whole space,  while it is
 necessary to truncate the problem onto a bounded domain in computation. Thanks to  the Fourier series,  the operator $\mathcal{T}$ and
 its associated eigenvalue decomposition $\Pi_{\pm}$ can be explicitly computed in the phase space for the bounded domain case (see \eqref{eq:dl}, \eqref{eq:Pipm}), which is crucial for the success of our method introduced below.

\subsection{Multiscale decomposition}
In order to design a uniformly accurate numerical method for the
Dirac equation \eqref{SDEdd} (or \eqref{SDEd}),
from the experience in designing uniformly accurate methods for
the nonlinear Klein-Gordon equation in the nonrelativistic limit regime \cite{BCZ,Chartier,FS2},
recalling that there exist propagating waves with $O(\eps^2)$ wavelength in time,
a multiscale decomposition should possess  $O(\eps^2)$ accuracy,
so that the first order time derivative of the residue is
bounded and a uniformly accurate scheme can be obtained.
Thus, the first order Schr\"odinger decomposition \eqref{eq:MD:1st}
is inappropriate, and the second order Pauli-type decomposition (see \cite{N,NM,BMP}) might work.
However, due to the linearity of the Dirac equation \eqref{SDEdd} (or \eqref{SDEd}),
we have a  direct and better decomposition by applying the
projectors $\Pi_{\pm}$ to the Dirac equation \eqref{SDEdd} \cite{BMP}.

Choose the time step $\tau:=\Delta t>0$ and denote time
steps as $t_n:=n\,\tau$ for $n\ge0$. From $t=t_n$ to $t_{n+1}$,
the solution $\Phi(t,x)=\Phi(t_n+s,x)$
(denote $\Phi^n(x)=\Phi(t_n,x)$) to the Dirac equation \eqref{SDEdd} can be decomposed as
\be\label{eq:multideco}
\Phi(t_n+s,x)=e^{-is/\eps^2}\left(\Psi_+^{1,n}(s,x)+\Psi_-^{1,n}(s,x)\right)
+e^{is/\eps^2}\left(\Psi_+^{2,n}(s,x)+\Psi_-^{2,n}(s,x)\right),\quad 0\leq s\leq \tau,
\ee
where $\left(\Psi_+^{1,n}(s,x),\Psi_-^{1,n}(s,x)\right)$
solves the coupled system for $x\in\Bbb R$  and $0\le s\le \tau$ as
\be\label{eq:decsysfull:1}
\begin{cases}
i\p_s\Psi_+^{1,n}(s,x)=\frac{1}{\eps^2}\left(\sqrt{I-\eps^2\Delta}-I\right)\Psi_+^{1,n}(s,x)
+\Pi_+\left(W\Psi_+^{1,n}(s,x)+W\Psi_-^{1,n}(s,x)\right),\\
i\p_s\Psi_-^{1,n}(s,x)=\frac{1}{\eps^2}\left(-\sqrt{I-\eps^2\Delta}-I\right)\Psi_-^{1,n}(s,x)
+\Pi_-\left(W\Psi_+^{1,n}(s,x)+W\Psi_-^{1,n}(s,x)\right),\\
\Psi_+^{1,n}(0,x)=\Pi_+\Phi^n(x),\quad \Psi_-^{1,n}(0,x)={\bf 0},
\end{cases}
\ee
with $W:=W(t_n+s,x)$, and similarly $\left(\Psi_+^{2,n}(s,x),\Psi_-^{2,n}(s,x)\right)$ satisfies
\be\label{eq:decsysfull:2}
\begin{cases}
i\p_s\Psi_+^{2,n}(s,x)=\frac{1}{\eps^2}\left(\sqrt{I-\eps^2\Delta}+I\right)\Psi_+^{2,n}(s,x)
+\Pi_+\left(W\Psi_+^{2,n}(s,x)+W\Psi_-^{2,n}(s,x)\right),\\
i\p_s\Psi_-^{2,n}(s,x)=\frac{1}{\eps^2}\left(-\sqrt{I-\eps^2\Delta}+I\right)\Psi_-^{2,n}(s,x)
+\Pi_-\left(W\Psi_+^{2,n}(s,x)+W\Psi_-^{2,n}(s,x)\right),\\
\Psi_+^{2,n}(0,x)={\bf 0},\quad \Psi_-^{2,n}(0,\bx)=\Pi_-\Phi^n(x).
\end{cases}
\ee
Following the analysis in \cite{BMP}, it is easy to verify that  $\Psi_+^{1,n}(s,x)=O(1),\Psi_-^{2,n}(s,x)=O(1),\Psi_-^{1,n}(s,x)=O(\eps^2),\Psi_+^{2,n}(s,x)=O(\eps^2)$,
and $\p_s\Psi_{\pm}^{k,n}=O(1)$ for $k=1,2$. Thus $\Phi(t_{n+1},x)$ can be evaluated numerically by
solving  the two coupled systems \eqref{eq:decsysfull:1} and \eqref{eq:decsysfull:2}
via the exponential wave integrator Fourier pseudospectral (EWI-FP) method\cite{BC2,BCZ}
 through the decomposition \eqref{eq:multideco}.

\subsection{The MTI Fourier spectral discretization}
As a common practice in the literatures
\cite{BL, BHM, Hamm0, Hamm, HJMSZ, NSG,  WHJY, WT} for practical computation,
the Dirac equation \eqref{SDEdd} with $d=1$ is usually truncated on
a bounded  interval $\Omega=(a,b)$
for $\Phi:=\Phi(t,x)\in \mathbb{C}^2$,
 \be\label{eq:de1d}
 i\partial_t\Phi(t,x)=\frac{1}{\eps^2}\mathcal{T}\Phi(t,x)+W(t,x)\Phi(t,x),
\qquad x\in\Omega,\quad t>0,
\ee
with periodic boundary conditions and initial condition as
\be\label{eq:de1d-init}
\Phi(t,x)\ \hbox{is}\ (b-a)\ \hbox{periodic in}\ x, \quad t\geq 0;\qquad
\Phi(0,x) = \Phi_0(x),\quad x\in\overline{\Omega};
\ee
where we use the same notations $\mathcal{T}$ and $W(t,x)$  here as  those in the whole space case (\ref{eq:op:def}) by abuse of notation and we remark that the
domain of $\mathcal{T}$ here is $(H_p^1(\Omega))^2$ with $H_p^1(\Omega)=\left\{u\in H^1(\Omega)| u(a)=u(b)\right\}$.

Then the systems \eqref{eq:decsysfull:1} and \eqref{eq:decsysfull:2} for
the decomposition  \eqref{eq:multideco}  with $x\in\Omega$ and $0\le s\le \tau$
collapse to
\be\label{eq:decsys:1}
\begin{cases}
i\p_s\Psi_+^{1,n}(s,x)=\frac{1}{\eps^2}\left(\sqrt{I-\eps^2\Delta}-I\right)\Psi_+^{1,n}(s,x)
+\Pi_+\left(W\Psi_+^{1,n}(s,x)+W\Psi_-^{1,n}(s,x)\right),\\
i\p_s\Psi_-^{1,n}(s,x)=\frac{1}{\eps^2}\left(-\sqrt{I-\eps^2\Delta}-I\right)\Psi_-^{1,n}(s,x)
+\Pi_-\left(W\Psi_+^{1,n}(s,x)+W\Psi_-^{1,n}(s,x)\right),\\
\Psi_{\pm}^{1,n}(s,x) \ \hbox{is}\ (b-a)\ \hbox{periodic in}\ x, \quad
\Psi_+^{1,n}(0,x)=\Pi_+\Phi(t_n,x),\quad \Psi_-^{1,n}(0,x)={\bf0},
\end{cases}
\ee
and
\be\label{eq:decsys:2}
\begin{cases}
i\p_s\Psi_+^{2,n}(s,x)=\frac{1}{\eps^2}\left(\sqrt{I-\eps^2\Delta}+I\right)\Psi_+^{2,n}(s,x)
+\Pi_+\left(W\Psi_+^{2,n}(s,x)+W\Psi_-^{2,n}(s,x)\right),\\
i\p_s\Psi_-^{2,n}(s,x)=\frac{1}{\eps^2}\left(-\sqrt{I-\eps^2\Delta}+I\right)\Psi_-^{2,n}(s,x)
+\Pi_-\left(W\Psi_+^{2,n}(s,x)+W\Psi_-^{2,n}(s,x)\right),\\
\Psi_{\pm}^{2,n}(s,x)  \ \hbox{is}\ (b-a)\ \hbox{periodic in}\ x,\quad
\Psi_+^{2,n}(0,x)={\bf0},\quad \Psi_-^{2,n}(0,x)=\Pi_-\Phi(t_n,x).
\end{cases}
\ee

Choose the mesh size $h:=\Delta x=\frac{b-a}{M}$ with $M$  being a positive even integer and denote
the grid points
as $x_j:=a+j\,h$ for $j=0,1,\ldots, M$.
Denote $X_M=\{U=(U_0,U_1,...,U_M)^T\ |\ U_j\in {\mathbb C}^2, j=0,1,\ldots,M, \ U_0=U_M\}$
and the $l^2$ norm in $X_M$ is given by
\be
\|U\|^2_{l^2}=h\sum^{M-1}_{j=0}|U_j|^2, \qquad  U\in X_M.
\ee
 Introduce
\begin{equation*}
Y_{M}=Z_M\times Z_M, \qquad \hbox{with}\quad
Z_M={\rm span}\left\{\phi_l(x)=e^{i\mu_l(x-a)},\,\,\mu_l=\frac{2l\pi}{b-a},\quad l=-\frac{M}{2}, \ldots, \frac{M}{2}-1\right\}.
\end{equation*}
Let $[C_p(a,b)]^2$ be the function space consisting of all periodic
vector function $U(x):\ [a,b]\to {\mathbb C}^2$. For any $U(x)\in [L^2(a,b)]^2$ and $U\in X_M$,
define $P_M:\ [L^2(a, b)]^2\rightarrow Y_M$ as the standard projection operator,
$I_M:\ [C_p(a, b)]^2 \rightarrow Y_M$ and $I_M:X_M\rightarrow Y_M$ as the standard
interpolation operator, i.e.
\be
(P_MU)(x)=\sum_{l=-M/2}^{M/2-1}\widehat{U}_l\, e^{i\mu_l(x-a)},
\quad (I_MU)(x)=\sum_{l=-M/2}^{M/2-1}\widetilde{U}_l\, e^{i\mu_l(x-a)},\qquad
a\leq x\leq b,
\ee
with
\be\label{fouriercoef}
\widehat{U}_l=\frac{1}{b-a}\int_a^bU(x)\,e^{-i\mu_l(x-a)}\,dx,\quad
\widetilde{U}_l=\frac{1}{M}\sum_{j=0}^{M-1}U_j\, e^{-2ijl\pi/M},\quad l=-\frac{M}{2},\ldots, \frac{M}{2}-1,
\ee
where $U_j=U(x_j)$ when $U$ is a function. The Parseval's identity implies that
\be\label{eq:normeq}
\|I_M(U)(\cdot)\|_{L^2}=\|U\|_{l^2},\quad \forall U\in X_M.
\ee

The Fourier spectral discretization for \eqref{eq:decsys:1}-\eqref{eq:decsys:2} reads:

\noindent Find $\Psi_{\pm,M}^{k,n}:=\Psi_{\pm,M}^{k,n}(s)=\Psi_{\pm,M}^{k,n}(s,x)\in Y_M$ ($0\leq s\leq \tau$), i.e.
\begin{eqnarray}
\label{FPR}
\Psi_{\pm,M}^{k,n}(s,x)=\sum^{M/2-1}_{l=-M/2}\widehat{\left(\Psi_{\pm}^{k,n}\right)}_l(s)\,
e^{i\mu_l(x-a)},\quad a\le x\le b, \qquad s\ge0,\qquad k=1,2,
\end{eqnarray}
such that for $a<x<b$ and $0\le s\le \tau$
\be\label{eq:decsys:FS:1}
\begin{cases}
i\p_{s}\Psi_{+,M}^{1,n}(s)=\frac{1}{\eps^2}\left(\sqrt{I-\eps^2\Delta}-I\right)\Psi_{+,M}^{1,n}(s)
+\Pi_+\left(W\Psi_{+,M}^{1,n}(s)+W\Psi_{-,M}^{1,n}(s)\right),\\
i\p_s\Psi_{-,M}^{1,n}(s)=\frac{1}{\eps^2}\left(-\sqrt{I-\eps^2\Delta}-I\right)\Psi_{-,M}^{1,n}(s)
+\Pi_-\left(W\Psi_{+,M}^{1,n}(s)+W\Psi_{-,M}^{1,n}(s)\right),\\
\Psi_{+,M}^{1,n}(0)=P_M\left(\Pi_+\Phi(t_n,x)\right),\quad \Psi_{-,M}^{1,n}(0)={\bf 0},
\end{cases}
\ee
and \be\label{eq:decsys:FS:2}
\begin{cases}
i\p_{s}\Psi_{+,M}^{2,n}(s)=\frac{1}{\eps^2}\left(\sqrt{I-\eps^2\Delta}+I\right)\Psi_{+,M}^{2,n}(s)
+\Pi_+\left(W\Psi_{+,M}^{2,n}(s)+W\Psi_{-,M}^{2,n}(s)\right),\\
i\p_s\Psi_{-,M}^{2,n}(s)=\frac{1}{\eps^2}\left(-\sqrt{I-\eps^2\Delta}+I\right)\Psi_{-,M}^{2,n}(s)
+\Pi_-\left(W\Psi_{+,M}^{2,n}(s)+W\Psi_{-,M}^{2,n}(s)\right),\\
\Psi_{+,M}^{2,n}(0)={\bf 0},\quad \Psi_{-,M}^{2,n}(0)=P_M\left(\Pi_-\Phi(t_n,x)\right).
\end{cases}
\ee
We then obtain the equations for the Fourier coefficients with $0\le s\le \tau$ as
\be\label{eq:fc:1}\begin{cases}
i\p_s\widehat{\left(\Psi_{+}^{1,n}\right)}_l(s)=\frac{\delta_l^-}{\eps^2}
I_2\widehat{\left(\Psi_{+}^{1,n}\right)}_l(s)
+\Pi_l^+\widehat{\left(W\Psi_{+,M}^{1,n}\right)}_l(s)+\Pi_l^+\widehat{\left(W\Psi_{-,M}^{1,n}\right)}_l(s),\\
i\p_s\widehat{\left(\Psi_-^{1,n}\right)}_l(s)=-\frac{\delta_l^+}
{\eps^2}I_2\widehat{\left(\Psi_-^{1,n}\right)}_l(s)
+\Pi_l^-\widehat{\left(W\Psi_{+,M}^{1,n}\right)}_l(s)+\Pi_l^-\widehat{\left(W\Psi_{-,M}^{1,n}\right)}_l(s),
\end{cases}
\ee
and
\be\label{eq:fc:2}\begin{cases}
i\p_s\widehat{\left(\Psi_+^{2,n}\right)}_l(s)=
\frac{\delta_l^+}{\eps^2}I_2\widehat{\left(\Psi_{+}^{2,n}\right)}_l(s)
+\Pi_l^+\widehat{\left(W\Psi_{+,M}^{2,n}\right)}_l(s)+\Pi_l^+\widehat{\left(W\Psi_{-,M}^{2,n}\right)}_l(s),\\
i\p_s\widehat{\left(\Psi_-^{2,n}\right)}_l(s)=
-\frac{\delta_l^-}{\eps^2}I_2\widehat{\left(\Psi_-^{2,n}\right)}_l(s)
+\Pi_l^-\widehat{\left(W\Psi_{+,M}^{2,n}\right)}_l(s)+\Pi_l^-\widehat{\left(W\Psi_{-,M}^{2,n}\right)}_l(s),
\end{cases}
\ee
for $l=-\frac{M}{2},\ldots,\frac{M}{2}-1$, where
\be\label{eq:dl}
\delta_l=\sqrt{1+\eps^2\mu_l^2},\qquad \delta_l^+=\delta_l+1,\qquad \delta_l^-=\delta_l-1;
\ee
and $\Pi_l^+$ and $\Pi_l^-$ are the corresponding Fourier representations of the projectors $\Pi_{\pm}$ as
\be\label{eq:Pipm}
\Pi_l^{+}=\begin{pmatrix}\frac{1+\delta_l}{2\delta_l}&\frac{\eps\mu_l}{2\delta_l}\\ \frac{\eps\mu_l}{2\delta_l}&\frac{\eps^2\mu_l^2}{2\delta_l(\delta_l+1)}\end{pmatrix},\quad
\Pi_l^{-}=\begin{pmatrix}\frac{\eps^2\mu_l^2}{2\delta_l(\delta_l+1)}&-\frac{\eps\mu_l}{2\delta_l}\\ -\frac{\eps\mu_l}{2\delta_l}&\frac{1+\delta_l}{2\delta_l}\end{pmatrix},\quad  l=-\frac{M}{2},\ldots,\frac{M}{2}-1.
\ee
Using the variation-of-constant formula, we can write the solution as
\begin{equation*}\begin{split}
&\widehat{\left(\Psi_+^{1,n}\right)}_l(s)=e^{-i\delta_l^-s/\eps^2}\widehat{\left(\Psi_+^{1,n}\right)}_l(0)
-i\int_0^se^{-i\delta_l^-(s-w)/\eps^2}\Pi_l^+
\left(\widehat{\left(W\Psi_{+,M}^{1,n}\right)}_l(w)+\widehat{\left(W\Psi_{-,M}^{1,n}\right)}_l(w)\right)\,dw,\\
&\widehat{\left(\Psi_-^{1,n}\right)}_l(s)=e^{i\delta_l^+s/\eps^2}\widehat{\left(\Psi_-^{1,n}\right)}_l(0)
-i\int_0^se^{i\delta_l^+(s-w)/\eps^2}\Pi_l^-
\left(\widehat{\left(W\Psi_{+,M}^{1,n}\right)}_l(w)+\widehat{\left(W\Psi_{-,M}^{1,n}\right)}_l(w)\right)\,dw,\\
&\widehat{\left(\Psi_+^{2,n}\right)}_l(s)=e^{-i\delta_l^+s/\eps^2}\widehat{\left(\Psi_+^{2,n}\right)}_l(0)
-i\int_0^se^{-i\delta_l^+(s-w)/\eps^2}\Pi_l^+
\left(\widehat{\left(W\Psi_{+,M}^{2,n}\right)}_l(w)+\widehat{\left(W\Psi_{-,M}^{2,n}\right)}_l(w)\right)\,dw,\\
&\widehat{\left(\Psi_-^{2,n}\right)}_l(s)=e^{i\delta_l^-s/\eps^2}\widehat{\left(\Psi_-^{2,n}\right)}_l(0)
-i\int_0^se^{i\delta_l^-(s-w)/\eps^2}\Pi_l^-
\left(\widehat{\left(W\Psi_{+,M}^{2,n}\right)}_l(w)+\widehat{\left(W\Psi_{-,M}^{2,n}\right)}_l(w)\right)\,dw.
\end{split}
\end{equation*}
Using the initial conditions and choosing $s=\tau$, we can approximate the integrals via the
Gautschi type quadrature rules \cite{Gautschi0,Gautschi-type-3,Gautschi-type-5,BCZ} or
EWI \cite{BC2,BCJ,BCZ,BD,BDZ}.
Using Taylor expansion and  the equations \eqref{eq:decsys:FS:1}-\eqref{eq:decsys:FS:2}
to determine the first order derivative, we can approximate
the first integral in the above equation as
\begin{align}
\label{eq:inte1}&-i\int_0^\tau e^{-i\delta_l^-(\tau-w)/\eps^2}\Pi_l^{+}\left(\widehat{
\left(W\Psi_{+,M}^{1,n}\right)}_l(w)+\widehat{\left(W\Psi_{-,M}^{1,n}\right)}_l(w)\right)\,dw\\
&\ \approx-i\int_0^\tau e^{-i\delta_l^-(\tau-w)/\eps^2}\Pi_l^{+}
\left(\widehat{\left(W\Psi_{+,M}^{1,n}\right)}_l(0)+\widehat{
\left(W\Psi_{-,M}^{1,n}\right)}_l(0)\right)\,dw\nonumber\\
&\quad -i\int_0^\tau e^{-i\delta_l^-(\tau-w)/\eps^2}w\Pi_l^{+}
\left(\partial_s\widehat{\left(W\Psi_{+,M}^{1,n}\right)}_l(0)+
\partial_s\widehat{\left(W\Psi_{-,M}^{1,n}\right)}_l(0)\right)\,dw\nonumber\\
&\ =p_l^-(\tau)\;\Pi_l^+\widehat{\left(W\Psi_{+,M}^{1,n}\right)}_l(0)+q_l^-
(\tau)\;\Pi_l^+\left(\partial_s\widehat{\left(W\Psi_{+,M}^{1,n}\right)}_l(0)+\partial_s\widehat{\left(
W\Psi_{-,M}^{1,n}\right)}_l(0)\right),\nonumber
\end{align}
where
\be\label{eq:coeff1}
p_l^-(\tau)=-i\tau e^{\frac{-i\tau\delta_l^-}{2\eps^2}}\sinc\left(\frac{\delta_l^-\tau}{2\eps^2}\right),\quad
q_l^-(\tau)=-\frac{\tau\eps^2}{\delta_l^-}\left(I-e^{\frac{-i\tau\delta_l^-}
{2\eps^2}}\sinc\left(\frac{\delta_l^-\tau}{2\eps^2}\right)\right),
\ee
and $\sinc(s)=\frac{\sin s}{s}$ with $\sinc(0)=1$. Note that $p_l^-(\tau)=O(\tau)$ and $q_l^-(\tau)=O(\tau^2)$
for $l=-\frac{M}{2},\ldots,\frac{M}{2}-1$, and for the special case $l=0$, $p_0^-(\tau)=-i\tau$ and $q_0^-(\tau)=-i\frac{\tau^2}{2}$. Similarly, the other integrals can be approximated as
\begin{align}\label{eq:inte2}
&-i\int_0^{\tau}e^{i\delta_l^+(\tau-w)/\eps^2}\Pi_l^-\left(\widehat{
\left(W\Psi_{+,M}^{1,n}\right)}_l(w)+\widehat{\left(W\Psi_{-,M}^{1,n}\right)}_l(w)\right)\,dw\\
&\ \approx-\overline{p_l^+(\tau)}\;\Pi_l^-\widehat{\left(W\Psi_{+,M}^{1,n}\right)}_l(0)-\overline{q_l^+(\tau)}\;
\Pi_l^-\left(\partial_s\widehat{\left(W\Psi_{+,M}^{1,n}\right)}_l(0)+
\partial_s\widehat{\left(W\Psi_{-,M}^{1,n}\right)}_l(0)\right),
\nonumber\\
\label{eq:inte3}&-i\int_0^{\tau}e^{-i\delta_l^+(\tau-w)/\eps^2}\Pi_l^+
\left(\widehat{\left(W\Psi_{+,M}^{2,n}\right)}_l(w)+\widehat{\left(W\Psi_{-,M}^{2,n}\right)}_l(w)\right)\,dw\\
&\ \approx p_l^+(\tau)\;\Pi_l^+\widehat{\left(W\Psi_{-,M}^{2,n}\right)}_l(0)+
q_l^+(\tau)\;\Pi_l^+\left(\partial_s\widehat{\left(W\Psi_{+,M}^{2,n}\right)}_l(0)
+\partial_s\widehat{\left(W\Psi_{-,M}^{2,n}\right)}_l(0)\right),\nonumber\\
\label{eq:inte4}&-i\int_0^{\tau}e^{i\delta_l^-(\tau-w)/\eps^2}\Pi_l^-
\left(\widehat{\left(W\Psi_{+,M}^{2,n}\right)}_l(w)+\widehat{\left(W\Psi_{-,M}^{2,n}\right)}_l(w)\right)\,dw\\
&\ \approx-\overline{p_l^-(\tau)}\;\Pi_l^-\widehat{\left(W\Psi_{-,M}^{2,n}\right)}_l(0)-\overline{q_l^-(\tau)}
\;\Pi_l^-\left(\partial_s\widehat{\left(W\Psi_{+,M}^{2,n}\right)}_l(0)+
\partial_s\widehat{\left(W\Psi_{-,M}^{2,n}\right)}_l(0)\right),\nonumber
\end{align}
with $\bar{c}$ denoting the complex conjugate of $c$ and
\be\label{eq:coeff2}
p_l^+(\tau)=-i\tau e^{\frac{-i\tau\delta_l^+}{2\eps^2}}\sinc\left(\frac{\delta_l^+\tau}{2\eps^2}\right),\quad
q_l^+(\tau)=-\frac{\tau\eps^2}{\delta_l^+}\left(I-e^{\frac{-i\tau\delta_l^+}
{2\eps^2}}\sinc\left(\frac{\delta_l^+\tau}{2\eps^2}\right)\right).
\ee
For simplicity of notations, by omitting the spatial $x$ variable and denoting
\be\label{eq:frldef}
f_{\pm}^{k,n}(s)=W(t_n+s)\Psi_{\pm,M}^{k,n}(s),\quad \dot{f}_{\pm}^{k,n}(s)
=W(t_n)\p_s\Psi_{\pm,M}^{k,n}(s),\quad g_{\pm}^{k,n}(s)=\p_sW(t_n+s)\Psi_{\pm,M}^{k,n}(s),
\ee
in order to design a uniform convergent method with respect to $\eps\in(0,1]$,
we find the solutions should be updated in the order from small component to large component as
\begin{equation*}
\begin{cases}
\widehat{\left(\Psi_-^{1,n}\right)}_l(\tau)&\approx -\overline{p_l^+(\tau)}\Pi_l^-\widehat{\left(f_+^{1,n}\right)}_l(0)-
\overline{q_l^+(\tau)}\Pi_l^-\widehat{\left(g_+^{1,n}\right)}_l(0)-
\overline{q_l^+(\tau)}\Pi_l^-\left(\widehat{\left(\dot{f}_+^{1,n}\right)}_l(0)
+\widehat{\left(\dot{f}_-^{1,n}\right)}_l(0)\right),\\
\widehat{\left(\Psi_+^{2,n}\right)}_l(\tau)&\approx p_l^+(\tau)\Pi_l^+\widehat{\left(f_-^{2,n}\right)}_l(0)+q_l^+(\tau)\Pi_l^
+\widehat{\left(g_-^{2,n}\right)}_l(0)+
q_l^+(\tau)\Pi_l^+\left(\widehat{\left(\dot{f}_+^{2,n}\right)}_l(0)+
\widehat{\left(\dot{f}_-^{2,n}\right)}_l(0)\right),
\end{cases}
\end{equation*}
with initial values and derivatives determined from \eqref{eq:decsys:FS:1}-\eqref{eq:decsys:FS:2} as
\begin{align*}
\widehat{\left(\Psi_+^{1,n}\right)}_l(0)&=\Pi_l^+\widehat{(\Phi(t_n))}_l,\quad \widehat{\left(\Psi_-^{1,n}\right)}_l(0)=0,\quad\widehat{\left(\Psi_+^{2,n}\right)}_l(0)=0, \quad \widehat{\left(\Psi_-^{1,n}\right)}_l(0)=\Pi_l^-\widehat{(\Phi(t_n))}_l,\\
\widehat{\left(\p_s\Psi_{+,M}^{1,n}\right)}_l(0)&\approx-i\frac{2\sin(\mu_l^2\tau/2)}
{\delta_l^+\tau}\widehat{\left(\Psi_+^{1,n}\right)}_l(0)
-i\;\Pi_l^+\widehat{\left(W(t_n)\Psi_{+,M}^{1,n}(0)\right)}_l,\\
\widehat{\left(\p_s\Psi_{-,M}^{1,n}\right)}_l(0)&= -i\;\Pi_l^-
\widehat{\left(W(t_n)\Psi_{+,M}^{1,n}(0)\right)}_l,\quad
\widehat{\left(\p_s\Psi_{+,M}^{2,n}\right)}_l(0)= -i\;\Pi_l^+
\widehat{\left(W(t_n)\Psi_{-,M}^{2,n}(0)\right)}_l,\\
\widehat{\left(\p_s\Psi_{-,M}^{2,n}\right)}_l(0)&\approx i\frac{2\sin(\mu_l^2\tau/2)}{\delta_l^+\tau}\widehat{\left(\Psi_{-}^{2,n}\right)}_l(0)
-i\;\Pi_l^-\widehat{\left(W(t_n)\Psi_{-,M}^{2,n}(0)\right)}_l,\quad  l=-\frac{M}{2},\ldots,\frac{M}{2}-1,
\end{align*}
where the derivatives $\p_s\Psi_{+,M}^{1,n}(0)$ and $\p_s\Psi_{-,M}^{2,n}(0)$ are approximated using filters $2\sin(\mu_l^2\tau/2)/\tau$ instead of $\mu_l^2$ ($l=-\frac{M}{2},\ldots,\frac{M}{2}-1$)
to avoid loss of accuracy, then followed by
\begin{equation*}
\begin{cases}
\widehat{\left(\Psi_+^{1,n}\right)}_l(\tau)&\approx e^{-i\delta_l^-\tau/\eps^2}\widehat{\left(\Psi_+^{1,n}\right)}_l(0)+p_l^-
(\tau)\;\Pi_l^+\widehat{\left(f_+^{1,n}\right)}_l(0)+
q_l^-(\tau)\;\Pi_l^+\widehat{\left(g_+^{1,n}\right)}_l(0)\\
&\quad +q_l^-(\tau)\;\Pi_l^+\left(\widehat{\left(\dot{f}_+^{1,n}\right)}_l(0)+
\widehat{\left(\dot{f}_-^{1,n}\right)}_l(0)\right),\\
\widehat{\left(\Psi_-^{2,n}\right)}_l(\tau)&\approx e^{i\delta_l^-
\tau/\eps^2}\widehat{\left(\Psi_-^{2,n}\right)}_l(0)-
\overline{p_l^-(\tau)}\;\Pi_l^-\widehat{\left(f_-^{2,n}\right)}_l(0)
-\overline{q_l^-(\tau)}\;\Pi_l^-\widehat{\left(g_-^{2,n}\right)}_l(0)\\
&\quad-
\overline{q_l^-(\tau)}\;\Pi_l^-\left(\widehat{\left(\dot{f}_+^{2,n}\right)}_l(0)+
\widehat{\left(\dot{f}_-^{2,n}\right)}_l(0)\right),
\end{cases}
\end{equation*}
with $\widehat{\left(\p_s\Psi_-^{1,n}\right)}_l(0)$ and
$\widehat{\left(\p_s\Psi_+^{2,n}\right)}_l(0)$ approximated in another way as
\begin{equation*}
\widehat{\left(\p_s\Psi_-^{1,n}\right)}_l(0)\approx
\frac{1}{\tau}\widehat{\left(\Psi_-^{1,n}\right)}_l(\tau),
\quad \widehat{\left(\p_s\Psi_+^{2,n}\right)}_l(0)\approx
\frac{1}{\tau}\widehat{\left(\Psi_+^{2,n}\right)}_l(\tau),\quad  l=-\frac{M}{2},\ldots,\frac{M}{2}-1.
\end{equation*}

\subsection{The MTI-FP method in 1D}
In practice, the integrals for computing the Fourier transform
coefficients in (\ref{fouriercoef}), (\ref{eq:inte1})-(\ref{eq:inte4})
are usually approximated by the numerical quadratures.
Let $\Phi_j^n$ be the numerical approximation of the
exact solution $\Phi(t_n,x_j)$ to the Dirac equation \eqref{eq:de1d}
for $n\ge0$ and $j=0,1,\ldots,M$, and denote $\Phi^n\in X_M$ as the numerical solution vector
at time $t=t_n$; in addition, let $\Psi_{\pm,j}^{k,n+1}$ ($k=1,2$) be the numerical approximation of
$\Psi_{\pm}^{k,n}(\tau,x_j)$ for $j=0,1,\ldots,M$ and $n\ge0$, and denote
$V_j^n=V(t_n,x_j)$, $A_{1,j}^n=A_1(t_n,x_j)$ and
$W_j^n=W(t_n,x_j)=V_j^nI_2-A_{1,j}^n\sigma_1$  for $j=0,1,\ldots,M$ and $n\ge0$.
Choosing $\Phi^0_j=\Phi_0(x_j)$ for $j=0,1,\ldots,M$,
then the multiscale time integrator Fourier pseudospectral (MTI-FP) method
for discretizing the Dirac equation (\ref{SDEdd}) in 1D reads for $n\ge0$ and $j=0,1,\ldots,M$ as:
\be\label{eq:scheme:1}
\Phi_j^{n+1}=e^{-i\tau/\eps^2}\left(\Psi_{+,j}^{1,n+1}+\Psi_{-,j}^{1,n+1}\right)
+e^{i\tau/\eps^2}\left(\Psi_{+,j}^{2,n+1}+\Psi_{-,j}^{2,n+1}\right)=\sum\limits_{l=-M/2}^{M/2-1}
\widetilde{\left(\Phi^{n+1}\right)}_le^{i\mu_l(x_j-a)},
\ee
where
\be\label{eq:scheme:2}
\Psi_{\pm,j}^{k,n+1}=\sum\limits_{l=-M/2}^{M/2-1}\widetilde{\left(\Psi_{\pm}^{k,n+1}
\right)}_le^{i\mu_l(x_j-a)},\quad k=1,2,
\ee
with
\begin{equation}\label{eq:scheme:3}
\begin{cases}
\widetilde{\left(\Psi_-^{1,n+1}\right)}_l&= -\overline{p_l^+(\tau)}\Pi_l^-
\widetilde{\left(f_+^{1}\right)}_l-
\overline{q_l^+(\tau)}\Pi_l^-\widetilde{\left(g_+^{1}\right)}_l-
\overline{q_l^+(\tau)}\Pi_l^-\left(\widetilde{\left(\dot{f}_+^{1}\right)}_l+
\widetilde{\left(\dot{f}_-^{1}\right)}_l\right),\\
\widetilde{\left(\Psi_+^{2,n+1}\right)}_l&= p_l^+(\tau)\Pi_l^+
\widetilde{\left(f_-^{2}\right)}_l+q_l^+(\tau)\Pi_l^+
\widetilde{\left(g_-^{2}\right)}_l+
q_l^+(\tau)\Pi_l^+\left(\widetilde{\left(\dot{f}_+^{2}\right)}_l
+\widetilde{\left(\dot{f}_-^{2}\right)}_l\right),\\
\widetilde{\left(\Psi_+^{1,n+1}\right)}_l&= e^{-i\frac{\delta_l^-
\tau}{\eps^2}}\widetilde{\left(\Psi_+^{1}\right)}_l+
p_l^-(\tau)\Pi_l^+\widetilde{\left(f_+^{1}\right)}_l+
q_l^-(\tau)\Pi_l^+\widetilde{\left(g_+^{1}\right)}_l +q_l^-(\tau)\Pi_l^+
\left(\widetilde{\left(\dot{f}_+^{1}\right)}_l+
\widetilde{\left(\dot{f}_-^{1,*}\right)}_l\right),\\
\widetilde{\left(\Psi_-^{2,n+1}\right)}_l&= e^{i\frac{\delta_l^-
\tau}{\eps^2}}\widetilde{\left(\Psi_-^{2}\right)}_l-
\overline{p_l^-(\tau)}\Pi_l^-\widetilde{\left(f_-^{2}\right)}_l
-\overline{q_l^-(\tau)}\Pi_l^-\widetilde{\left(g_-^{2}\right)}_l-
\overline{q_l^-(\tau)}\Pi_l^-\left(\widetilde{\left(\dot{f}_+^{2,*}\right)}_l+
\widetilde{\left(\dot{f}_-^{2}\right)}_l\right),
\end{cases}
\end{equation}
and
\begin{equation}\label{eq:scheme:4}
\begin{cases}
f_{\pm,j}^{k}=\sum\limits_{l=-M/2}^{M/2-1}\widetilde{\left(f_{\pm}^{k}
\right)}_le^{i\mu_l(x_j-a)},\quad
\dot{f}_{\pm,j}^{k}=\sum\limits_{l=-M/2}^{M/2-1}\widetilde{\left(
\dot{f}_{\pm}^{k}\right)}_le^{i\mu_l(x_j-a)},\quad
g_{\pm,j}^{k}=\sum\limits_{l=-M/2}^{M/2-1}\widetilde{\left(
g_{\pm}^{k}\right)}_le^{i\mu_l(x_j-a)},\\ \dot{f}_{-,j}^{1,*}=
\sum\limits_{l=-M/2}^{M/2-1}\widetilde{
\left(\dot{f}_{-}^{1,*}\right)}_le^{i\mu_l(x_j-a)}
\quad \dot{f}_{+,j}^{2,*}=\sum\limits_{l=-M/2}^{M/2-1}\widetilde{
\left(\dot{f}_{+}^{2,*}\right)}_le^{i\mu_l(x_j-a)},\quad j=0,1,\ldots,M, \quad k=1,2,
\end{cases}
\end{equation}
with
\begin{equation}\label{eq:scheme:5}
\begin{cases}
\widetilde{\left(\Psi_+^{1}\right)}_l=\Pi_l^+\widetilde{(\Phi^n)}_l,
\qquad \widetilde{\left(\Psi_-^{1}\right)}_l={\bf 0},
\qquad\widetilde{\left(\Psi_+^{2}\right)}_l={\bf 0}, \qquad \widetilde{\left(\Psi_-^{1}\right)}_l=\Pi_l^-\widetilde{(\Phi^n)}_l,\\
\widetilde{\left(\dot\Psi_+^{1}\right)}_l=-i\frac{2\sin(\mu_l^2\tau/2)}
{\delta_l^+\tau}\widetilde{\left(\Psi_+^{1}\right)}_l
-i\,\Pi_l^+\widetilde{\left(f_+^1\right)}_l,\quad
\widetilde{\left(\dot{\Psi}_-^{1}\right)}_l= -i\,\Pi_l^-\widetilde{\left(f_+^{1}\right)}_l,\quad l=-\frac{M}{2},\ldots,\frac{M}{2}-1,\\
\widetilde{\left(\dot{\Psi}_+^{2}\right)}_l= -i\,\Pi_l^+\widetilde{\left(f_-^{2}\right)}_l,\qquad
\widetilde{\left(\dot{\Psi}_-^{2}\right)}_l=i\frac{2\sin(\mu_l^2\tau/2)}
{\delta_l^+\tau}\widetilde{\left(\Psi_-^{2}\right)}_l-i\,\Pi_l^-\widetilde{\left(f_-^{2}\right)}_l,\\
f_{\pm,j}^k=W_j^n\Psi_{\pm,j}^k,\qquad \dot{f}_{\pm,j}^k=W_j^n
\dot{\Psi}_{\pm,j}^k,\qquad g_{\pm,j}^k=\p_tW(t_n,x_j) \Psi_{\pm,j}^k,\\
\dot{f}_{-,j}^{1,*}=\frac{1}{\tau}W_j^n\left(\Psi_{-,j}^{1,n+1}\right)\qquad \dot{f}_{+,j}^{2,*}=\frac{1}{\tau}W_j^n\left(\Psi_{+,j}^{2,n+1}\right), \qquad j=0,1,\ldots, M, \quad k=1,2.
\end{cases}
\end{equation}
\begin{rmk}When the electro-magnetic potentials in the Dirac equation \eqref{eq:de1d} are time independent, i.e. $V(t,x)=V(x)$ and $A_1(t,x)=A_1(x)$,
the above scheme \eqref{eq:scheme:1}-\eqref{eq:scheme:5} will be simplified with  $g_{\pm,j}^k=0$, $k=1,2, j=0,1,\ldots,M$, i.e., the $\widetilde{(g_{\pm}^k)}_l$ term vanishes.
\end{rmk}

Note that in the above MTI-FP method, we first
evaluate the small components $\Psi_{-,j}^{1,n+1}$ and $\Psi_{+,j}^{2,n+1}$,
then approximate their time derivatives via finite difference approximations,
and finally use them in the evaluation of the large components $\Psi_{+,j}^{1,n+1}$
and $\Psi_{-,j}^{2,n+1}$.

  This MTI-FP method for the Dirac equation (\ref{SDEdd}) (or (\ref{SDEd})) is explicit, accurate,
easy to implement and very efficient due to the  discrete fast Fourier transform. The memory
cost is $O(M)$ and the computational cost per time step is $O(M \log M )$.
As will be shown in the next section, it is uniformly convergent
in space and time with respect to $\eps\in(0,1]$.

\section{A uniform error bound}\label{sec:error}
In this section, we establish two independent error estimates for the MTI-FP method
\eqref{eq:scheme:1} via two different mathematical approaches. Let $0<T<\infty$ be any fixed time.
Motivated by the results for the Dirac equation (\ref{SDEdd}) (or (\ref{SDEd}))
in \cite{BMP,CC}, we make the following assumptions
on the electromagnetic potentials
 \begin{equation*}
(A)\hskip2cm \|V\|_{W^{2,\infty}([0,T];(W_p^{m_0,\infty})^2)}+
\|A_1\|_{W^{2,\infty}([0,T];(W_p^{m_0,\infty})^2)}\lesssim 1,\qquad m_0\ge4,\hskip4cm
\end{equation*}
and the exact solution $\Phi:=\Phi(t,x)$ of the Dirac equation \eqref{eq:de1d} with $\eps\in(0,1]$
\begin{align*}
(B)\quad \|\Phi\|_{ L^\infty([0,T]; (H_p^{m_0})^2)}\lesssim 1,\qquad
\|\partial_t\Phi\|_{L^{\infty}([0, T];(H_p^{m_0-1})^2)}\lesssim \frac{1}{\varepsilon^2},\quad
\|\partial_{tt}\Phi(t,x)\|_{L^{\infty}([0,T];(L^2)^2)}\lesssim\frac{1}{\varepsilon^4},\hskip4cm
\end{align*}
where $H^m_p(\Omega)=\{u\ |\ u\in H^{m}(\Omega),\  \partial_x^l u(a)=\partial_x^l u(b),\
l=0,\ldots,m-1\}$ and $W_p^{m,\infty}(\Omega)=\{u\ |\ u\in W^{m,\infty}(\Omega),\
\partial_x^l u(a)=\partial_x^l u(b),\
l=0,\ldots,m-1\}$ for $m\in{\mathbb N}$.
We remark here that the assumption (B) is equivalent to
the initial value $\Phi_0(x)\in (H_p^{m_0})^2$ \cite{N,BMP}
under the assumption (A).

\smallskip

\begin{theorem} \label{thm:main}
Let $\Phi^n\in X_M$ be the
numerical approximation obtained from the MTI-FP method \eqref{eq:scheme:1}
and denote $\Phi_I^n(x)=I_M(\Phi^n)(x)\in Y_M$.
Under the assumptions (A) and (B), there
exist constants $0<\tau_0,h_0\leq1$ sufficiently small and independent of $\eps$,
such that for any $0<\eps\leq1$, when $0<\tau\leq\tau_0$ and $0<h\leq  h_0$, we have
\be\label{eq:est:main}
\|\Phi(t_n,\cdot)-\Phi_I^n(\cdot)\|_{L^2}\lesssim h^{m_0}+\frac{\tau^2}{\eps^2},\quad \|\Phi(t_n,\cdot)-\Phi_I^n(\cdot)\|_{L^2}\lesssim h^{m_0}+\tau^2+\eps^2, \quad 0\le n\le\frac{T}{\tau},
\ee
which yields the uniform error bound by taking
minimum among the two error bounds for $\eps\in(0,1]$
\be
\|\Phi(t_n,\cdot)-\Phi_I^n(\cdot)\|_{L^2}\lesssim h^{m_0}
+\min_{0<\eps\le1} \left\{\frac{\tau^2}{\eps^2},  \tau^2+\eps^2\right\}
\lesssim h^{m_0}+\tau, \quad 0\le n\le\frac{T}{\tau}.
\ee
\end{theorem}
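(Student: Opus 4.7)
My plan is to prove the two bounds in \eqref{eq:est:main} independently by two distinct analyses, and then take the pointwise minimum to get the uniform $O(h^{m_0}+\tau)$ estimate. I would first split $\Phi(t_n,\cdot)-\Phi_I^n(\cdot) = (\Phi(t_n,\cdot)-P_M\Phi(t_n,\cdot)) + (P_M\Phi(t_n,\cdot)-\Phi_I^n(\cdot))$. The first term is controlled by $h^{m_0}$ directly from assumption (B) using the standard Fourier projection estimate, so the task reduces to bounding the second difference, which lives in $Y_M$ and on which the operators $\mathcal{T}$, $\Pi_\pm$, $\sqrt{I-\eps^2\Delta}$ all diagonalize via \eqref{eq:dl}--\eqref{eq:Pipm}. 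From here I work frequency-wise.

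For the first bound $h^{m_0}+\tau^2/\eps^2$, I interpret the MTI-FP scheme as an exponential wave integrator applied to the coupled system \eqref{eq:decsys:FS:1}--\eqref{eq:decsys:FS:2}, where the phase factors $e^{\pm i\tau/\eps^2}$ remove the leading fast oscillation and the residual propagators $e^{\mp i\delta_l^\pm\tau/\eps^2}$ are unitary. Comparing the exact variation-of-constants representation with its Gautschi-type two-point Taylor quadrature in \eqref{eq:inte1}--\eqref{eq:inte4} and \eqref{eq:scheme:3}, I would expand the remainder in Taylor form; one factor of $\partial_s$ acting on the integrand $W\Psi_{\pm,M}^{k,n}$ pulls out one factor of $1/\eps^2$ from the stiff right-hand side of \eqref{eq:decsys:FS:1}--\eqref{eq:decsys:FS:2}, giving a local truncation error of size $O(\tau^3/\eps^2)$. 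The filter $2\sin(\mu_l^2\tau/2)/(\delta_l^+\tau)$ used to approximate $\partial_s\Psi_+^{1,n}(0)$ and $\partial_s\Psi_-^{2,n}(0)$ in \eqref{eq:scheme:5} introduces only an $O(\tau^2)$ correction, uniformly in $\mu_l$. A standard energy estimate and a discrete Gr\"{o}nwall argument (using unitarity of the propagators and the uniform bound on $W$ from (A)) then converts the local $O(\tau^3/\eps^2)$ into the global $O(\tau^2/\eps^2)$.

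For the second bound $h^{m_0}+\tau^2+\eps^2$, the crucial structural fact is that in the decomposition \eqref{eq:multideco}, the phase factors $e^{\pm is/\eps^2}$ absorb the $1/\eps^2$ oscillation so that $\partial_s\Psi_\pm^{k,n}$ and $\partial_{ss}\Psi_\pm^{k,n}$ are bounded uniformly in $\eps$; moreover, one has $\Psi_-^{1,n}, \Psi_+^{2,n} = O(\eps^2)$ while $\Psi_+^{1,n}, \Psi_-^{2,n} = O(1)$. I would first prove these regularity statements rigorously at the Fourier level, using that the symbol $(\sqrt{1+\eps^2\mu_l^2}-1)/\eps^2$ is bounded by $\mu_l^2$ (no $1/\eps^2$ blow-up) while the symbol $(\sqrt{1+\eps^2\mu_l^2}+1)/\eps^2$ acting on an $O(\eps^2)$ quantity is still $O(1)$. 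With these a priori bounds the Taylor remainder in \eqref{eq:inte1}--\eqref{eq:inte4} is genuinely $O(\tau^3)$ per step without the $1/\eps^2$ penalty, and the finite-difference approximations $\widehat{(\partial_s\Psi_-^{1,n})}_l(0)\approx\widehat{(\Psi_-^{1,n})}_l(\tau)/\tau$ and $\widehat{(\partial_s\Psi_+^{2,n})}_l(0)\approx\widehat{(\Psi_+^{2,n})}_l(\tau)/\tau$ from \eqref{eq:scheme:5} contribute only $O(\tau)\cdot O(\eps^2) = O(\tau\eps^2)$ per step (because they are applied to components that are themselves $O(\eps^2)$). Summing $T/\tau$ steps and using the $O(\eps^2)$ error built into using \eqref{eq:multideco} as an exact representation of a restart at $t=t_n$ gives the desired $\tau^2+\eps^2$.

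The main obstacle, as usual in multiscale analyses of this flavor, is the error propagation: the subproblems at step $n+1$ start from the numerical datum $\Phi^n$, not the exact $\Phi(t_n)$, and one must show that an $L^2$ perturbation of initial data of size $\eta$ produces an $L^2$ perturbation of the one-step map of size $(1+C\tau)\eta$, with constants independent of $\eps$. This will require commutator estimates between the Fourier projectors $\Pi_\pm$ and the multiplication by $W$, stability of the coupled evolution under (A), and a careful discrete Gr\"{o}nwall argument applied separately in each bound. Once both estimates hold for all $0\le n\le T/\tau$, the uniform bound follows by choosing the smaller of $\tau^2/\eps^2$ and $\tau^2+\eps^2$; the crossover $\eps^2\sim\tau$ gives the advertised $O(\tau)$ uniform rate.
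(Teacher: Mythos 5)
Your overall architecture matches the paper's: split off the $h^{m_0}$ Fourier projection error, work with the residual error $\bee^n = P_M\Phi(t_n) - \Phi_I^n$ frequency-wise, prove local truncation bounds, and close with a discrete Gr\"onwall argument exploiting unitarity of the diagonal propagators $e^{\mp i\delta_l^\pm\tau/\eps^2}$ and the orthogonality of $\Pi_l^\pm$. Your Type~I argument is essentially correct in spirit, although the way you phrase where the $1/\eps^2$ enters is slightly off: a single $\partial_s$ on $W\Psi_\pm^{k,n}$ does \emph{not} produce a $1/\eps^2$ factor, because Lemma~\ref{lem:msregularity} gives $\|\partial_s\Psi_\pm^{k,n}\|\lesssim 1$ uniformly. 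The $1/\eps^2$ only appears in the \emph{second} derivative $\partial_{ss}\Psi_-^{1,n},\partial_{ss}\Psi_+^{2,n}\lesssim 1/\eps^2$, which enters the two-point Taylor/Gautschi remainder and yields the $O(\tau^3/\eps^2)$ local error.

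The genuine gap is in your Type~II argument. You claim that ``the phase factors $e^{\pm is/\eps^2}$ absorb the $1/\eps^2$ oscillation so that $\partial_s\Psi_\pm^{k,n}$ \emph{and} $\partial_{ss}\Psi_\pm^{k,n}$ are bounded uniformly in $\eps$.'' This is false for the small components: from \eqref{eq:decsys:1}, $\partial_{ss}\Psi_-^{1,n}$ contains the term $\frac{1}{\eps^2}(\sqrt{I-\eps^2\Delta}+I)\partial_s\Psi_-^{1,n}$, and since $\partial_s\Psi_-^{1,n}=O(1)$ this is genuinely $O(1/\eps^2)$ — exactly as recorded in \eqref{eq:reg:2}. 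If the uniform second-derivative bound held, you would in fact be able to prove a uniform $O(\tau^2)$ bound, which is stronger than the theorem and contradicts the observed degeneracy near $\tau\sim\eps^2$. What the paper actually does to obtain the $\tau^2+\eps^2$ bound is to \emph{not Taylor-expand} the terms involving the small components $f_-^1=W\Psi_-^{1,n}$ (and symmetrically $f_+^2$). Instead, those terms are estimated brutally using $\|\Psi_-^{1,n}\|,\|\Psi_+^{2,n}\|\lesssim\eps^2$ from \eqref{eq:reg:3}: the integral $\int_0^\tau e^{i\delta_l^+(\tau-s)/\eps^2}\Pi_l^-\widehat{(f_-^1(s))}_l\,ds$ is $O(\tau\eps^2)$ in $L^2$ directly, and the corresponding quadrature coefficients satisfy $|p_l^+(\tau)|\lesssim\eps^2$, $|q_l^+(\tau)|\lesssim\tau\eps^2$ from \eqref{eq:coefbd}. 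This bypasses the problematic $\partial_{ss}\Psi_-^{1,n}$ entirely and produces the $\tau\eps^2$ contribution per step, hence $\eps^2$ globally. Separately, your assertion that the $\eps^2$ ``is built into using \eqref{eq:multideco} as an exact representation'' is not correct: that decomposition is an exact identity valid for every $\eps$, and introduces no approximation error. The $\eps^2$ term in the final bound is purely a consequence of the local truncation error being controlled via smallness of the small components rather than via Taylor remainder. You should revise the Type~II step accordingly.
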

\begin{rmk}
From the analysis point of view, we remark that the $W_p^{m_0,\infty}$
assumption in (A) is necessary such that the exact solution $\Phi(t,x)$ of the Dirac equation
remains in $(H_p^{m_0})^2$, which would give the
spectral accuracy in space. In practice, as long as the solution of the Dirac equation (\ref{SDEdd})
(or (\ref{SDEd}) is well localized such that the error from the
periodic truncation of potential term $W(t,x)\Phi(t,x)$ is negligible,
the error estimates in the above theorem still hold.
\end{rmk}

\bigskip

 In the rest of this paper, we will write the
exact solution $\Phi(t,x)$ as $\Phi(t)$ for simplicity of notations. Define the error function
$\bee^n(x)=\sum\limits_{l=-M/2}^{M/2-1}\widetilde{(\bee^n)}_le^{i\mu_l(x-a)}\in Y_M$ for
$n\ge0$ as
\be\label{eq:edef}
\bee^n(x)=P_M(\Phi(t_n))(x)-\Phi_I^n(x)=P_M(\Phi(t_n))(x)-I_M\left(\Phi^n\right)(x),\quad x\in\Omega.
\ee
Using the assumption (B), triangle inequality and standard Fourier projection properties, we find
\begin{align}\label{eq:error-tri}
\|\Phi(t_n,\cdot)-\Phi_I^n(\cdot)\|_{L^2}\leq&\,\|\Phi(t_n,\cdot)-
P_M(\Phi(t_n))(\cdot)\|_{L^2}+\|P_M(\Phi(t_n))(\cdot)-\Phi_I^n(\cdot)\|_{L^2}\\
\lesssim &\,h^{m_0}+\|\bee^n(\cdot)\|_{L^2},\quad 0\leq n\leq\frac{T}{\tau}.\nonumber
\end{align}
Hence, we only need estimate $\|\bee^n(\cdot)\|_{L^2}$. To this purpose,
local truncation error will be studied as the first step. Since the MTI-FP method \eqref{eq:scheme:1}
is designed by the multiscale decomposition, the following properties of the decomposition \eqref{eq:decsys:1}-\eqref{eq:decsys:2} are essential for the error analysis.

From $t=t_n$ to $t_{n+1}$, let $\Psi_{\pm}^{k,n}(s,x)$ ($0\le s\le \tau$, $k=1,2$) be the solutions of the systems \eqref{eq:decsys:1} and \eqref{eq:decsys:2},
and the decomposition \eqref{eq:multideco} holds as
\be\label{eq:multideco-1d}
\Phi(t_n+s,x)=e^{-is/\eps^2}\left(\Psi_+^{1,n}(s,x)+\Psi_-^{1,n}(s,x)\right)
+e^{is/\eps^2}\left(\Psi_+^{2,n}(s,x)+\Psi_-^{2,n}(s,x)\right),\quad x\in\Omega.
\ee
Then the error $\bee^{n+1}(x)$ ($n\ge0$) \eqref{eq:edef} can be decomposed as
\be\label{eq:dec:error}
\bee^{n+1}(x)=e^{-i\tau/\eps^2}\left(\bez_+^{1,n+1}(x)+\bez_-^{1,n+1}(x)\right)+
e^{i\tau/\eps^2}\left(\bez_+^{2,n+1}(x)+\bez_-^{2,n+1}(x)\right),\quad x\in\Omega,
\ee
with
\be\label{eq:ez-def}
\bez_{\pm}^{k,n+1}(x)=\sum\limits_{l=-M/2}^{M/2-1}\widetilde{(\bez_{\pm}^{k,n+1})}_le^{i\mu_l(x-a)}
=P_M(\Psi_{\pm}^{k,n}(\tau))(x)-I_M\left(\Psi_{\pm}^{k,n+1}\right)(x),\quad k=1,2.
\ee

By the same arguments in \cite{BMP}, we can establish the regularity results.
\begin{lemma}\label{lem:msregularity} Under the assumptions (A) and (B),
the exact solutions $\Psi_{\pm}^{k,n}(s,x)$ ($0\le s\le \tau$,
$0\leq n\leq\frac{T}{\tau}-1$) with $k=1,2$ of the systems \eqref{eq:decsys:1} and
\eqref{eq:decsys:2} satisfy
\begin{align}
\label{eq:reg:1}&\|\Psi_{\pm}^{k,n}\|_{L^\infty([0,\tau];(H_p^{m_0})^2)}\lesssim 1,
\quad \|\partial_{ss}\Psi_{+}^{1,n}\|_{L^\infty([0,\tau];(H_p^{m_4})^2)}
+\|\partial_{ss}\Psi_{-}^{2,n}\|_{L^\infty([0,\tau];(H_p^{m_4})^2)}\lesssim1,\\
\label{eq:reg:2}&\|\partial_s\Psi_{\pm}^{k,n}\|_{L^\infty([0,\tau];(H_p^{m_2})^2)}\lesssim 1,\quad\|\partial_{ss}\Psi_{+}^{2,n}\|_{L^\infty([0,\tau];(H_p^{m_4})^2)}
+\|\partial_{ss}\Psi_{-}^{1,n}\|_{L^\infty([0,\tau];(H_p^{m_4})^2)}\lesssim\frac{1}{\eps^2},\\
\label{eq:reg:3}&\|\Psi_{+}^{2,n}\|_{L^\infty([0,\tau];(H_p^{m_1})^2)}
+\|\Psi_{-}^{1,n}\|_{L^\infty([0,\tau];(H_p^{m_1})^2)}\lesssim\eps^2,\qquad m_l=m_0-l,\quad l=1,2,4.
\end{align}
\end{lemma}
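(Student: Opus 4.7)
The plan is to mirror the argument of Bechouche--Mauser--Poupaud \cite{BMP} applied to each of the two linear subsystems \eqref{eq:decsys:1} and \eqref{eq:decsys:2} separately. The arguments hinge on sharp Fourier-multiplier estimates for the semigroup generators; for system \eqref{eq:decsys:1} these are
$$A_+ := \frac{1}{\eps^2}\bigl(\sqrt{I-\eps^2\Delta}-I\bigr),\qquad A_- := -\frac{1}{\eps^2}\bigl(\sqrt{I-\eps^2\Delta}+I\bigr),$$
with Fourier symbols $\frac{\xi^2}{\sqrt{1+\eps^2\xi^2}+1}$ and $-\frac{\sqrt{1+\eps^2\xi^2}+1}{\eps^2}$, respectively. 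The three facts I would record at the outset are: (i) $\|A_+ u\|_{H^{s}}\le\tfrac{1}{2}\|u\|_{H^{s+2}}$ uniformly in $\eps\in(0,1]$ (the $A_+$-symbol is pointwise $\le \xi^2/2$); (ii) $\|A_-^{-1}\|_{H^s\to H^s}\le\eps^2/2$ uniformly in $\eps$ (since $|\delta_l^+|\ge 2$ and $|A_-^{-1}(\xi)|=\eps^2/(\sqrt{1+\eps^2\xi^2}+1)$); and (iii) $e^{-isA_\pm}$ are unit-modulus Fourier multipliers, hence isometries on every $H_p^s$. An analogous trio of facts applies to system \eqref{eq:decsys:2} with the roles of the two generators swapped.

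For the $H^{m_0}$ bound in \eqref{eq:reg:1}, I would write the Duhamel representation of each $\Psi_{\pm}^{k,n}$, take $H^{m_0}$ norms using (iii), invoke the algebra estimate $\|Wu\|_{H^{m_0}}\lesssim\|u\|_{H^{m_0}}$ from assumption (A), and close with Gronwall. The delicate estimate \eqref{eq:reg:3} is the heart of the proof and is obtained by an integration by parts in Duhamel: starting from $\Psi_-^{1,n}(s)=-i\int_0^s e^{-i(s-w)A_-}\phi(w)\,dw$ with $\phi:=\Pi_-W(\Psi_+^{1,n}+\Psi_-^{1,n})$, and noting $e^{-i(s-w)A_-}=-iA_-^{-1}\partial_w e^{-i(s-w)A_-}$, one integration by parts yields the schematic identity
$$\Psi_-^{1,n}(s)=-A_-^{-1}\phi(s)+A_-^{-1}e^{-isA_-}\phi(0)+A_-^{-1}\!\int_0^s\! e^{-i(s-w)A_-}\,\partial_w\phi(w)\,dw,$$
and fact (ii) then extracts the advertised factor $\eps^2$, provided $\phi$ and $\partial_w\phi$ are uniformly controlled in $H^{m_1}$; the latter follows by substituting the PDE for $\partial_w\Psi_\pm^{1,n}$ and using the $H^{m_0}$ bound from Step 1 together with fact (i). The identical argument handles $\Psi_+^{2,n}$, now inverting the operator $\tfrac{1}{\eps^2}(\sqrt{I-\eps^2\Delta}+I)$ from system \eqref{eq:decsys:2}.

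Finally, the derivative bounds in \eqref{eq:reg:1}--\eqref{eq:reg:2} are read off from the PDE. For $\partial_s\Psi_+^{1,n}=-iA_+\Psi_+^{1,n}-i\Pi_+W(\Psi_+^{1,n}+\Psi_-^{1,n})$, fact (i) delivers the uniform $H^{m_2}$ bound directly. The subtle case is $\partial_s\Psi_-^{1,n}=-iA_-\Psi_-^{1,n}-i\Pi_-W(\Psi_+^{1,n}+\Psi_-^{1,n})$, where the naive symbol bound $|A_-(\xi)|\lesssim\eps^{-2}$ looks fatal; the remedy is the pointwise Fourier estimate $|A_-(\xi)|^2\le 4\eps^{-4}+4\eps^{-2}\xi^2$, combined with $\|\Psi_-^{1,n}\|_{H^{m_1}}\lesssim\eps^2$ from Step 3 and the inclusion $H^{m_1}\hookrightarrow H^{m_2}$, which precisely cancels both $\eps$-singularities and yields $\|A_-\Psi_-^{1,n}\|_{H^{m_2}}\lesssim 1$. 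The second derivatives follow by differentiating the PDE once more: the same dichotomy gives $\partial_{ss}\Psi_+^{1,n}$ uniform in $H^{m_4}$, whereas $\partial_{ss}\Psi_-^{1,n}$ acquires the advertised $\eps^{-2}$ factor because the cancellation against $|A_-|\lesssim \eps^{-2}$ at the level of $\partial_s\Psi_-^{1,n}$ only reduces the singularity by $\eps^2$, not $\eps^4$. The chief technical obstacle throughout is the tight bookkeeping of derivative losses -- the operator $A_+$ costs two derivatives per application and the multiplication by $W$ couples the two components of each subsystem -- which is precisely why the threshold $m_0\ge 4$ appears in assumption (B).
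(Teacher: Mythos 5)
Your plan mirrors the Bechouche--Mauser--Poupaud argument that the paper itself cites for \eqref{eq:reg:1}--\eqref{eq:reg:3} and then reproduces the paper's one-line PDE-differentiation for the second derivatives, so the overall strategy agrees with the paper's. The paper, however, defers everything but the $\partial_{ss}$ step to \cite{BMP}; your attempt to fill in the deferred part is the right instinct, but your Step~3 --- the $\varepsilon^2$-smallness of $\Psi_-^{1,n}$ and $\Psi_+^{2,n}$, which is the crux of the lemma --- contains a genuine circularity.

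After integration by parts you must bound $A_-^{-1}\int_0^s e^{-i(s-w)A_-}\partial_w\phi(w)\,dw$ with $\phi=\Pi_-W(\Psi_+^{1,n}+\Psi_-^{1,n})$, and you claim $\partial_w\phi$ is uniformly controlled ``by substituting the PDE for $\partial_w\Psi_\pm^{1,n}$ and using the $H^{m_0}$ bound from Step~1 together with fact~(i).'' But fact~(i) controls only $A_+$. The PDE gives $\partial_w\Psi_-^{1,n}=-iA_-\Psi_-^{1,n}-i\Pi_-W(\Psi_+^{1,n}+\Psi_-^{1,n})$, so $\partial_w\phi$ contains $\Pi_-W(-iA_-\Psi_-^{1,n})$; the symbol of $A_-$ has size $\varepsilon^{-2}$, and at this point you only know $\|\Psi_-^{1,n}\|_{H^{m_0}}\lesssim1$ from Step~1. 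Hence $WA_-\Psi_-^{1,n}$ is a priori $O(\varepsilon^{-2})$ and the outer $A_-^{-1}$ merely brings it back to $O(1)$, not $O(\varepsilon^2)$; the $\varepsilon^2$-smallness you would need to invoke here is exactly the conclusion of Step~3. (You do record the right pointwise estimate $|A_-(\xi)|^2\lesssim\varepsilon^{-4}+\varepsilon^{-2}\xi^2$, but you deploy it only in Step~4, after having assumed Step~3.) To close Step~3 you must either (a) run a Gronwall bootstrap: since $A_-=-\tfrac{2}{\varepsilon^2}I-A_+$ one has
\begin{equation*}
A_-^{-1}\Pi_-W A_- \;=\; \Pi_-W \;-\; A_-^{-1}\Pi_-[W,A_+],
\end{equation*}
with $\|A_-^{-1}\Pi_-[W,A_+]v\|_{H^s}\lesssim\varepsilon^2\|v\|_{H^{s+1}}$, so that after commutation the only non-$O(\varepsilon^2)$ contribution collapses to $-i\int_0^s e^{-i(s-w)A_-}\Pi_-W\Psi_-^{1,n}(w)\,dw$, whence $\|\Psi_-^{1,n}(s)\|_{H^{m_2}}\lesssim\varepsilon^2+\int_0^s\|\Psi_-^{1,n}(w)\|_{H^{m_2}}\,dw$ and Gronwall closes; or (b) simply cite \cite{BMP} for this bound, as the paper does. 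As a secondary point, even after the repair the $W\partial_w\Psi_+^{1,n}$ piece costs two derivatives through $A_+$, so $\partial_w\phi$ lives in $H^{m_2}$, not $H^{m_1}$; your route therefore delivers the $\varepsilon^2$-smallness in $H^{m_2}$ rather than the $H^{m_1}$ claimed in \eqref{eq:reg:3}. Steps~1 and~4 are correct and coincide with the paper's explicit computation.
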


\begin{proof} Noticing the properties of the projectors $\Pi_{\pm}$ and the assumption (B),
we get that the initial data $\Psi_{\pm}^{n}(0,x)\in (H_p^{m_0})^2$,
$\p_s\Psi_{\pm}^{n}(0,x)\in H^{m_0-2}_p$ with uniform bounds.
The  estimates for $\Psi_{\pm}^{k,n}$ and $\p_s\Psi_{\pm}^{k,n}$ have been derived in \cite{BMP},
where one only needs to replace the whole space Fourier transform
with the Fourier series on torus. Thus, the proof is omitted here for brevity. It remains to estimate
$\p_{ss}\Psi_{\pm}^{k,n}$. Here, we show the case $k=1$, while the  case $k=2$
is quite similar and the detail is omitted here for brevity.
Differentiating \eqref{eq:decsys:1} with respect to  $s$, we obtain
for $\p_{ss}\Psi_{\pm}^{1,n}(s)$,
\begin{align*}
i\p_{ss}\Psi_{+}^{1,n}(s)=&-\left(I-\eps^2\Delta\right)^{-1/2}\Delta\p_s\Psi_{+}^{1,n}(s)
+\Pi_+\p_s\left(W\Psi_+^{1,n}(s,x)+W\Psi_-^{1,n}(s,x)\right),\\
i\p_{ss}\Psi_{-}^{1,n}(s)=&-\frac{\sqrt{I-\eps^2\Delta}+I}{\eps^2}\p_s\Psi_{-}^{1,n}(s)
+\Pi_+\p_s\left(W\Psi_+^{1,n}(s,x)+W\Psi_-^{1,n}(s,x)\right).
\end{align*}
Since for any $\Phi\in (H^m_p)^2$ with $2\le m\in {\mathbb N}$, we have
\be
\left\|\left(I-\eps^2\Delta\right)^{-1/2}\Delta\Phi\right\|_{H_p^{m-2}}\leq \|\Phi\|_{H^m_p},\quad
\left\|\frac{\sqrt{I-\eps^2\Delta}+I}{\eps^2}\Phi \right\|_{H_p^{m-1}}\lesssim\frac{1}{\eps^2}\|\Phi\|_{H^m_p},
\ee
which implies the bounds \eqref{eq:reg:2} for $\p_{ss}\Psi_{\pm}^{1,n}(s)$, in view of  the estimates for $\Psi_{\pm}^{1,n}$ and
assumption (A).
\end{proof}

Having Lemma \ref{lem:msregularity} and the decomposition \eqref{eq:dec:error}, we can
define the local truncation error $\xi_{\pm}^{k,n}(x)=\sum\limits_{l=-M/2}^{M/2-1}
\widehat{(\xi_{\pm}^{k,n})}_le^{i\mu_l(x-a)}$ ($x\in\Omega$, $k=1,2$, $n\ge0$)
for the MTI-FP method \eqref{eq:scheme:1}-\eqref{eq:scheme:5} as
\begin{equation}\label{eq:localerr-def}
\begin{cases}
\widehat{(\xi_-^{1,n})}_l=&\widehat{\left(\Psi_-^{1,n}(\tau)\right)}_l +\overline{p_l^+(\tau)}\Pi_l^-\widetilde{\left(f_+^{1}(0)\right)}_l+
\overline{q_l^+(\tau)}\Pi_l^-\widetilde{\left(g_+^{1}(0)\right)}_l+
\overline{q_l^+(\tau)}\Pi_l^-\left(\widetilde{\left(\dot{f}_+^{1}(0)\right)}_l+
\widetilde{\left(\dot{f}_-^{1}(0)\right)}_l\right),\\
\widehat{(\xi_+^{2,n})}_l=&\widehat{\left(\Psi_+^{2,n}(\tau)\right)}_l- p_l^+(\tau)\Pi_l^+\widetilde{\left(f_-^{2}(0)\right)}_l-q_l^+(\tau)\Pi_l^
+\widetilde{\left(g_-^{2}(0)\right)}_l-
q_l^+(\tau)\Pi_l^+\left(\widetilde{\left(\dot{f}_+^{2}(0)\right)}_l+
\widetilde{\left(\dot{f}_-^{2}(0)\right)}_l\right),\\
\widehat{(\xi_{+}^{1,n})}_l=&\widehat{\left(\Psi_+^{1,n}(\tau)\right)}_l- e^{-i\frac{\delta_l^-\tau}{\eps^2}}\widehat{\left(\Psi_+^{1,n}(0)\right)}_l-
p_l^-(\tau)\Pi_l^+\widetilde{\left(f_+^{1}(0)\right)}_l-
q_l^-(\tau)\Pi_l^+\widetilde{\left(g_+^{1}(0)\right)}_l \\&-q_l^-(\tau)\Pi_l^+\left(\widetilde{\left(\dot{f}_+^{1}(0)\right)}_l+
\widetilde{\left(\dot{f}_-^{1,*}(\tau)\right)}_l\right),\qquad l=-\frac{M}{2},\ldots,\frac{M}{2}-1,\\
\widehat{(\xi_-^{2,n})}_l=&\widehat{\left(\Psi_-^{2,n}(\tau)\right)}_l-
e^{i\frac{\delta_l^-\tau}{\eps^2}}\widehat{\left(\Psi_-^{2,n}(0)\right)}_l
+\overline{p_l^-(\tau)}\Pi_l^-\widetilde{\left(f_-^{2}(0)\right)}_l
+\overline{q_l^-(\tau)}\Pi_l^-\widetilde{\left(g_-^{2}(0)\right)}_l\\&+
\overline{q_l^-(\tau)}\Pi_l^-\left(\widetilde{\left(\dot{f}_+^{2,*}(\tau)\right)}_l
+\widetilde{\left(\dot{f}_-^{2}(0)\right)}_l\right),
\end{cases}
\end{equation}
where
\begin{equation}\label{eq:localerr-def-2}
\begin{cases}
\widehat{\left(\Psi_+^{1,n}\right)}_l(0)=\Pi_l^+\widehat{(\Phi(t_n))}_l,\,\, \widehat{\left(\Psi_-^{1,n}\right)}_l(0)={\bf 0},\,\,\widehat{\left(\Psi_+^{2,n}\right)}_l(0)={\bf 0}, \,\,\widehat{\left(\Psi_-^{1,n}\right)}_l(0)=\Pi_l^-\widehat{(\Phi(t_n))}_l,\\
f_{\pm}^k(s)=W(t_n+s)\Psi_{\pm}^{k,n}(s),\quad \dot{f}_{\pm}^k(s)=W(t_n)\dot{\Psi}_{\pm}^{k,n}(s),\quad g_{\pm}^{k}(s)=\p_tW(t_n) \Psi_{\pm}^{k,n}(s),\\
\dot{f}_{-}^{1,*}(s)=\frac{1}{s}W(t_n)\left(\Psi_{-}^{1,n+1}(s)\right),\quad \dot{f}_{+}^{2,*}(s)=\frac{1}{s}W(t_n)\left(\Psi_{+}^{2,n+1}(s)\right),\quad 0< s\le \tau, \quad k=1,2,
\end{cases}
\end{equation}
and
\be
\dot{\Psi}_{\pm}^{k,n}(s,x)=\sum\limits_{l=-M/2}^{M/2}\widetilde{
\left(\dot\Psi_+^{k,n}(s)\right)}_le^{i\mu_l(x-a)},\quad x\in\Omega, \quad 0\le s\le \tau,\quad k=1,2,
\ee
with
\be\label{eq:localerr-def-3}
\begin{cases}
\widetilde{\left(\dot{\Psi}_+^{1,n}(s)\right)}_l=
-i\frac{2\sin(\mu_l^2\tau/2)}{\delta_l^+\tau}\widetilde{\left(\Psi_+^{1,n}(s)\right)}_l
-i\,\Pi_l^+\widetilde{\left(f_+^1(s)\right)}_l\,,\quad
\widetilde{\left(\dot{\Psi}_-^{1,n}(s)\right)}_l= -i\,\Pi_l^-\widetilde{\left(f_+^{1}(s)\right)}_l\,,\\
\widetilde{\left(\dot{\Psi}_+^{2,n}(s)\right)}_l= -i\,\Pi_l^+\widetilde{\left(f_-^{2}(s)\right)}_l\,,\quad
\widetilde{\left(\dot{\Psi}_-^{2,n}(s)\right)}_l=i\frac{2\sin(\mu_l^2\tau/2)}
{\delta_l^+\tau}\widetilde{\left(\Psi_-^{2,n}(s)\right)}_l-i\,\Pi_l^-\widetilde{\left(f_-^{2}(s)\right)}_l\,.
\end{cases}
\ee
We have the following estimates for the above local truncation error.

\begin{lemma} \label{lem:localerr} Under the assumptions (A) and (B), there
exist constants $0<\tau_0,h_0\leq1$ sufficiently small and independent of $\eps$,
such that for any $0<\eps\leq1$, when $0<\tau\leq\tau_0$ and $0<h\leq  h_0$,
we have the error estimates for the local truncation error
$\xi_{\pm}^{k,n}\in Y_M$ in \eqref{eq:localerr-def}
\begin{equation}\label{eq:localerr}
\|\xi_{\pm}^{k,n}(\cdot)\|_{L^2}\lesssim \tau\left(h^{m_0}+\frac{\tau^2}{\eps^2}\right),\quad
\|\xi_{\pm}^{k,n}(\cdot)\|_{L^2}\lesssim \tau\left(h^{m_0}+\tau^2+\eps^2\right),
\quad 0\le n<\frac{T}{\tau}, \quad k=1,2.
\end{equation}
\end{lemma}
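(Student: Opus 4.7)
The plan is to analyze the four local truncation errors via the Duhamel (variation-of-constants) formulas for \eqref{eq:decsys:FS:1}--\eqref{eq:decsys:FS:2}, which express each $\widehat{(\Psi_\pm^{k,n})}_l(\tau)$ as an initial propagator acting on the data plus an oscillatory integral of the form $-i\int_0^\tau e^{\mp i\delta_l^{\pm}(\tau-w)/\eps^2}\Pi_l^{\pm}\bigl(\widehat{(W\Psi_{+,M}^{k,n})}_l(w)+\widehat{(W\Psi_{-,M}^{k,n})}_l(w)\bigr)\,dw$. The MTI-FP scheme replaces the integrand at $w$ by its first-order Taylor polynomial around $w=0$, using either the analytic derivative $\dot f_{\pm}^k(0)$ or the finite-difference surrogate $\dot f_-^{1,*}(\tau)=W(t_n)\Psi_-^{1,n}(\tau)/\tau$ (resp.\ $\dot f_+^{2,*}(\tau)$) for the freshly-updated small component. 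The truncation error therefore splits into (i) a Fourier projection error coming from replacing exact Fourier coefficients by discrete ones $\widetilde{(\cdot)}_l$, and (ii) Taylor-type remainders in time.

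The spatial part (i) is handled by standard aliasing and projection estimates: under assumption (A) together with Lemma \ref{lem:msregularity}, each product $W(t_n+s)\Psi_{\pm}^{k,n}(s,\cdot)$ is uniformly bounded in $(H_p^{m_0})^2$, so $\|(P_M-I_M)(W\Psi_\pm^{k,n})\|_{L^2}\lesssim h^{m_0}$; after multiplication by the quadrature weights $|p_l^\pm|\lesssim \tau$, $|q_l^\pm|\lesssim \tau^2$, this contributes $O(\tau h^{m_0})$, matching both claimed bounds. For the temporal part (ii), I first establish the bound $\tau^3/\eps^2$: the Taylor remainder of the integrand is $\tfrac12 w^2\p_{ss}(W\Psi_\pm^{k,n})$ evaluated at some intermediate point, and Lemma \ref{lem:msregularity} gives $\|\p_{ss}\Psi_\pm^{k,n}\|_{L^2}\lesssim 1/\eps^2$; integrating $w^2$ against $e^{\pm i\delta_l^\pm(\tau-w)/\eps^2}$ and using $|\Pi_l^\pm|\le 1$, I obtain $O(\tau^3/\eps^2)$ uniformly in $l$, proving the first estimate in \eqref{eq:localerr}.

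For the $\eps$-robust bound $\tau(\tau^2+\eps^2)$, I would treat the small components and the large components separately. For the small components $\Psi_-^{1,n},\Psi_+^{2,n}$, Lemma \ref{lem:msregularity} yields $\|\Psi_-^{1,n}\|_{L^\infty(H_p^{m_0-1})}+\|\Psi_+^{2,n}\|_{L^\infty(H_p^{m_0-1})}\lesssim \eps^2$; since $W$ is bounded, the whole integrand in the Duhamel formula (and therefore the truncation error, bounded by twice the Duhamel integral plus the initial data term which itself vanishes) is of size $\tau\cdot \eps^2$. For the large components $\Psi_+^{1,n},\Psi_-^{2,n}$, the Taylor remainder is bounded using only the $O(1)$-bound $\|\p_{ss}\Psi_+^{1,n}\|_{L^\infty(H_p^{m_4})}+\|\p_{ss}\Psi_-^{2,n}\|_{L^\infty(H_p^{m_4})}\lesssim 1$ from \eqref{eq:reg:1}, which directly yields an $O(\tau^3)$ contribution; the $\eps^2$ summand absorbs the residual error incurred by the filter $2\sin(\mu_l^2\tau/2)/(\delta_l^+\tau)$ which differs from $\mu_l^2/\delta_l^+$ by $O(\tau^2\mu_l^4)$ but acts on data of size $\Psi_-^{1,n}(0)=0$ or $\Psi_+^{2,n}(0)=0$, so this filter produces no error for the initial-derivative approximation.

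The main obstacle will be the coupling induced by the starred terms $\dot f_-^{1,*}(\tau)$ and $\dot f_+^{2,*}(\tau)$ which appear in the updates of the \emph{large} components but depend on the exact values $\Psi_-^{1,n}(\tau),\Psi_+^{2,n}(\tau)$ (not on numerical ones -- we are estimating the \emph{local} truncation error). The key identity to exploit is $\Psi_-^{1,n}(\tau)/\tau=\p_s\Psi_-^{1,n}(0)+\tfrac{\tau}{2}\p_{ss}\Psi_-^{1,n}(\eta)$ with $\Psi_-^{1,n}(0)=\mathbf{0}$, so that $\dot f_-^{1,*}(\tau)-\dot f_-^1(0)=\tfrac{\tau}{2}W(t_n)\p_{ss}\Psi_-^{1,n}(\eta)$. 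Controlling this in $L^2$ by Lemma \ref{lem:msregularity} gives the extra contribution $|q_l^-(\tau)|\cdot\tau\cdot\|\p_{ss}\Psi_-^{1,n}\|_{L^2}\lesssim \tau^3/\eps^2$ for the first bound, and $|q_l^-(\tau)|\cdot\tau\cdot 1\lesssim \tau^3$ for the second bound (using that $\p_{ss}\Psi_-^{1,n}$ enters only in one derivative, so one can also bound this piece by $\eps^2$ when preferable via the $O(\eps^2)$ size of $\Psi_-^{1,n}(\tau)$ itself). Combining the contributions from (i) and (ii) and taking the minimum of the two strategies over $\eps\in(0,1]$ yields both estimates in \eqref{eq:localerr}.
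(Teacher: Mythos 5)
Your Type I estimate ($\tau h^{m_0}+\tau^3/\eps^2$) is essentially the paper's: Taylor-expand the Duhamel integrand, use $\|\partial_{ss}\Psi_\pm^{k,n}\|_{L^\infty(L^2)}\lesssim\eps^{-2}$ from Lemma \ref{lem:msregularity}, and absorb the $P_M$-vs-$I_M$ discrepancies into $\tau h^{m_0}$. Your treatment of the starred term via $\Psi_-^{1,n}(\tau)/\tau-\partial_s\Psi_-^{1,n}(0)=O(\tau\partial_{ss}\Psi_-^{1,n})$, with the alternative bound $\|\dot f_-^{1,*}(\tau)\|\lesssim\eps^2/\tau$, also matches what the paper does.

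However your Type II argument has a genuine gap. To bound $\xi_-^{1,n}$ and $\xi_+^{2,n}$ you assert that ``the whole integrand in the Duhamel formula is of size $\tau\eps^2$,'' but the Duhamel integrand for the small-component update is $\Pi_-\bigl(W\Psi_+^{1,n}(s)+W\Psi_-^{1,n}(s)\bigr)$, which contains the $O(1)$ large source $W\Psi_+^{1,n}$ and is therefore $O(1)$, not $O(\eps^2)$. A direct norm bound would then give $\|\xi_-^{1,n}\|=O(\eps^2)$ at best (not $O(\tau\eps^2)$), which is too weak. The paper's argument instead splits the \emph{integrand}, not the component updates: the $f_+^1$ part of the source (the large component, for which $\partial_{ss}\Psi_+^{1,n}=O(1)$ by \eqref{eq:reg:1}) is Taylor-expanded and yields $\tau(h^{m_0}+\tau^3)$, while the $f_-^1$ part is bounded pointwise in $s$ using $\|\Psi_-^{1,n}(s)\|\lesssim\eps^2$ from \eqref{eq:reg:3}, giving $\tau\eps^2$ after integrating over $[0,\tau]$. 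This integrand split, and the use of the extra smallness $|p_l^+|\lesssim\eps^2$, $|q_l^+|\lesssim\tau\eps^2$ from \eqref{eq:coefbd} (which you do not mention), is the mechanism that produces the $\tau(\tau^2+\eps^2)$ bound; your version does not reach it.

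Separately, your account of the filter error is factually incorrect: the filter $2\sin(\mu_l^2\tau/2)/(\delta_l^+\tau)$ appears in $\widetilde{(\dot\Psi_+^{1,n})}_l$ and $\widetilde{(\dot\Psi_-^{2,n})}_l$ (see \eqref{eq:localerr-def-3}), i.e.\ it multiplies the \emph{large} data $\Psi_+^{1,n}(0)=\Pi_+\Phi(t_n)$ and $\Psi_-^{2,n}(0)=\Pi_-\Phi(t_n)$, which are $O(1)$, not the zero initial data $\Psi_-^{1,n}(0)=\Psi_+^{2,n}(0)=\mathbf{0}$. The filter therefore does produce an error, of size $\tau\|\Psi_+^{1,n}(0)\|_{H^4}\lesssim\tau$ in the derivative approximation; after multiplication by $|q_l^-|\lesssim\tau^2$ this is $O(\tau^3)$ and fits in both bounds, but not for the reason you give.
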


\begin{proof} We will only prove the estimates \eqref{eq:localerr} for $k=1$,
as the proof for $k=2$ is the same.  Using the fact $\delta_l^+\ge1$
and the definitions of $p_l^{\pm}(\tau)$ and $q_l^{\pm}(\tau)$ ($l=-M/2,\ldots,M/2-1$),  we have
\be\label{eq:coefbd}
|p_l^{\pm}(\tau)|\lesssim \tau,\quad |q_l^{\pm}(\tau)|\lesssim\tau^2,\quad |p_l^+(\tau)|\lesssim \eps^2,\quad |q_l^+(\tau)|\lesssim \tau\eps^2.
\ee
Multiplying both sides of the equations in the system \eqref{eq:decsys:1}
by $e^{-i\mu_l(x-a)}$ and integrating over $\Omega$, we
easily recover the equations for $\widehat{(\Psi_{\pm}^{k,n})}_l(s)$, which are exactly the same as  \eqref{eq:fc:1}-\eqref{eq:fc:2} with $\Psi_{\pm,M}^{k,n}$ being replaced
by $\Psi_{\pm}^{k,n}$.

 Following the derivation of the MTI-FP method, it is easy to find that
 the local truncation error comes from the approximations in the integrals
 \eqref{eq:inte1},
\eqref{eq:inte2}, \eqref{eq:inte3} and \eqref{eq:inte4}. In particular,
for  $l=-M/2,\ldots,M/2-1$, we have
\begin{align}\label{eq:error-1}
\widehat{(\xi_-^{1,n})}_l=&-i\int_{0}^{\tau}e^{i\delta_l^+(\tau-s)/\eps^2}
\;\Pi_l^-\left(\widehat{\left(f^{1}_+(s)\right)}_l+\widehat{\left(f^{1}_-(s)\right)}_l\right)\,ds+
\overline{p_l^+(\tau)}\;\Pi_l^-\widetilde{\left(f_+^{1}(0)\right)}_l\\
&+\overline{q_l^+(\tau)}\;\Pi_l^-\widetilde{\left(g_+^{1}(0)\right)}_l
+\overline{q_l^+(\tau)}\;\Pi_l^-\left(\widetilde{\left(\dot{f}_+^{1}(0)\right)}_l+
\widetilde{\left(\dot{f}_-^{1}(0)\right)}_l\right),\nonumber\\
\label{eq:error-2}\widehat{(\xi_{+}^{1,n})}_l=&-i\int_0^{\tau}e^{-i\delta_l^-
(\tau-s)}\;\Pi_l^+\left(\widehat{\left(f^{1}_+(s)\right)}_l+\widehat{\left(f^{1}_-(s)\right)}_l\right)\,ds -p_l^-(\tau)\;\Pi_l^+\widetilde{\left(f_+^{1}(0)\right)}_l\\&-
q_l^-(\tau)\;\Pi_l^+\widetilde{\left(g_+^{1}(0)\right)}_l-
q_l^-(\tau)\;\Pi_l^+\left(\widetilde{\left(\dot{f}_+^{1}(0)\right)}_l+
\widetilde{\left(\dot{f}_-^{1,*}(\tau)\right)}_l\right).\nonumber
\end{align}

\noindent{\it Type I estimate}. Here
we prove the first kind estimate in \eqref{eq:localerr}. Using Taylor expansion, we have
\begin{align}\label{eq:tl:1}
\widehat{(\xi_-^{1,n})}_l=&-i\int_{0}^{\tau}\int_{0}^s\int_0^{s_1}e^{i\delta_l^+
(\tau-s)/\eps^2}\Pi_l^-\left(\widehat{\left(\p_{s_2s_2}f^{1}_+(s_2)\right)}_l+
\widehat{\left(\p_{s_2s_2}
f^{1}_-(s_2)\right)}_l\right)\,ds_2ds_1ds+\widehat{(\eta_{-}^1)}_l,\\
\widehat{(\xi_+^{1,n})}_l=&-i\int_{0}^{\tau}\int_{0}^s\int_0^{s_1}e^{-i\delta_l^-
(\tau-s)/\eps^2}\Pi_l^+\left(\widehat{\left(\p_{s_2s_2}f^{1}_+(s_2)\right)}_l+
\widehat{\left(\p_{s_2s_2}
f^{1}_-(s_2)\right)}_l\right)\,ds_2ds_1ds+\widehat{(\eta_{+}^1)}_l,\label{eq:tl:2}
\end{align}
where $\eta_{\pm}^1(x)=\sum\limits_{l=-M/2}^{M/2-1}\widehat{(\eta_{\pm}^1)}_le^{i\mu_l(x-a)}$ with
\begin{align*}
\widehat{(\eta_{-}^1)}_l=&\overline{p_l^+(\tau)}\;\Pi_l^-\left(-
\widehat{\left(f_+^1(0)\right)}_l+\widetilde{\left(f_+^{1}(0)\right)}_l\right)
+\overline{q_l^+(\tau)}\;\Pi_l^-\left(-\widehat{\left(g_+^{1}(0)\right)}_l+
\widetilde{\left(g_+^{1}(0)\right)}_l\right)\\
&+\overline{q_l^+(\tau)}\;\Pi_l^-\left(-\widehat{\left(\dot{f}_+^{1,n}(0)\right)}_l+
\widetilde{\left(\dot{f}_+^{1}(0)\right)}_l
-\widehat{\left(\dot{f}_-^{1,n}(0)\right)}_l+\widetilde{\left(\dot{f}_-^{1}(0)\right)}_l\right),\\
\widehat{(\eta_{+}^1)}_l=&p_l^-(\tau)\;\Pi_l^+\left(\widehat{\left(f_+^1(0)
\right)}_l-\widetilde{\left(f_+^{1}(0)\right)}_l\right)
+q_l^-(\tau)\;\Pi_l^+\left(\widehat{\left(g_+^{1}(0)\right)}_l-
\widetilde{\left(g_+^{1}(0)\right)}_l\right)\\
&+q_l^-(\tau)\;\Pi_l^+\left(\widehat{\left(\dot{f}_+^{1,n}(0)\right)}_l-
\widetilde{\left(\dot{f}_+^{1}(0)\right)}_l
+\widehat{\left(\dot{f}_-^{1,n}(0)\right)}_l-\widetilde{\left(\dot{f}_-^{1,*}(\tau)\right)}_l\right).
\end{align*}
Here  $\dot{f}_{\pm}^{1,n}(s)$ is given in \eqref{eq:frldef} with $\Psi_{\pm,M}^{k,n}$ being replaced
by $\Psi_{\pm}^{k,n}$. Since $\|\Pi_{l}^{\pm}\|_{l^2}\leq 1$ ($l=-\frac{M}{2},\ldots,\frac{M}{2}-1$) with $\|Q\|_{l^2}$ being the standard $l^2$ norm of the matrix $Q$, using \eqref{eq:coefbd} and triangle inequality, we obtain
\begin{align*}
|\widehat{(\eta_{-}^1)}_l|\lesssim &\tau\left|\widehat{\left(f_+^1(0)\right)}_l-\widetilde{\left(f_+^{1}(0)\right)}_l\right|
+\tau^2\left|\widehat{\left(g_+^{1}(0)\right)}_l-\widetilde{\left(g_+^{1}(0)\right)}_l\right|
+\tau^2\left|\widehat{\left(\dot{f}_+^{1,n}(0)\right)}_l-\widetilde{\left(\dot{f}_+^{1,n}(0)\right)}_l\right|\\
&+\tau^2\left|\widetilde{\left(\dot{f}_+^{1,n}(0)\right)}_l-\widetilde{\left(\dot{f}_+^{1}(0)\right)}_l\right|
+\tau^2\left|\widehat{\left(\dot{f}_+^{1,n}(0)\right)}_l-\widetilde{\left(\dot{f}_+^{1,n}(0)\right)}_l\right|
+\tau^2\left|\widetilde{\left(\dot{f}_-^{1,n}(0)\right)}_l-\widetilde{\left(\dot{f}_-^{1}(0)\right)}_l\right|.
\end{align*}
From the  Parseval's theorem, we get
\begin{align}\label{eq:eta-2}
\|\eta_{-}^1(\cdot)\|_{L^2}^2\lesssim &
\tau^2\|P_M(f_+^1(0))-I_M(f_+^1(0))\|_{L^2}^2+\tau^4\|P_M(g_+^1(0))-I_M(g_+^1(0))\|_{L^2}^2\nonumber\\
&+\tau^4\|P_M(\dot{f}_+^{1,n}(0))-I_M(\dot{f}_+^{1,n}(0))\|_{L^2}^2+
\tau^4\|P_M(\dot{f}_-^{1,n}(0))-I_M(\dot{f}_-^{1,n}(0))\|_{L^2}^2\nonumber\\
&+\tau^4\|I_M(\dot{f}_+^{1,n}(0))-I_M(\dot{f}_+^{1}(0))\|^2_{L^2}+
\tau^4\|I_M(\dot{f}_-^{1,n}(0))-I_M(\dot{f}_-^{1}(0))\|_{L^2}^2.
\end{align}
Recalling assumptions (A) and (B) and noticing Lemma \ref{lem:msregularity}, we have
\begin{align*}
&f_{\pm}^1(0)=W(t_n)\Psi_{\pm}^{1,n}(0)\in H_{p}^{m_0},\quad
g_{\pm}^1(0)=\p_tW(t_n)\Psi_{\pm}^{1,n}(0)\in H_{p}^{m_0},\\
&\dot{f}_{\pm}^{1,n}(0)=W(t_n)\p_s\Psi_{\pm}^{1,n}(0)\in H_p^{m_0-2}.
\end{align*}
Employing \eqref{eq:normeq} and Cauchy inequality, for $m_0\ge4$,
we can bound $\|\eta_{-}^1(\cdot)\|_{L^2}$ from \eqref{eq:eta-2} as
\begin{align}\label{eq:eta_2-1}
\|\eta_{-}^1(\cdot)\|_{L^2}\lesssim &\tau h^{m_0}+\tau^2(h^{m_0}+h^{m_0-2})
+\tau^2\sqrt{h\sum\limits_{j=0}^{M-1}
\left|W(t_n,x_j)(\p_s\Psi_+^{1,n}(0,x_j)-\dot{\Psi}_+^{1,n}(0,x_j))\right|^2}\nonumber\\
&+\tau^2\sqrt{h\sum\limits_{j=0}^{M-1}\left|W(t_n,x_j)(\p_s\Psi_-^{1,n}(0,x_j)-
\dot{\Psi}_-^{1,n}(0,x_j))\right|^2}\nonumber\\
\lesssim &\tau(h^{m_0}+\tau^2)+\tau^2\left(\|I_M(\p_s\Psi_+^{1,n}(0))-\dot{\Psi}_+^{1,n}(0)\|_{L^2}+
\|I_M(\p_s\Psi_-^{1,n}(0))-\dot{\Psi}_-^{1,n}(0)\|_{L^2}\right)\nonumber\\
\lesssim&\tau(h^{m_0}+\tau^2)+\tau^2\left(\|P_M(\p_s\Psi_+^{1,n}(0))-\dot{\Psi}_+^{1,n}(0)\|_{L^2}+
\|P_M(\p_s\Psi_-^{1,n}(0))-\dot{\Psi}_-^{1,n}(0)\|_{L^2}\right).
\end{align}
Using the equation \eqref{eq:decsys:FS:1}, we get
\begin{align*}
\widehat{(\p_s\Psi_+^{1,n})}_l(0)-\widetilde{(\dot{\Psi}_+^{1,n})}_l(0)=&
-i\frac{2\sin(\mu_l^2\tau/2)}{\delta_l^+\tau}\left(\widehat{(\Psi_+^{1,n}(0))}_l
-\widetilde{(\Psi_+^{1,n}(0))}_l\right)\\
&-i\,\Pi_l^+\left(\widehat{\left(f_+^1(0)\right)}_l-\widetilde{(f_+^1(0))}_l\right)
-i\,\left(\delta_l^--\frac{2\sin(\mu_l^2\tau/2)}{\delta_l^+\tau}\right)
\widehat{\left(\Psi_+^{1,n}(0)\right)}_l,\\
\widehat{(\p_s\Psi_-^{1,n})}_l(0)-\widetilde{(\dot{\Psi}_-^{1,n})}_l(0)=
&-i\,\Pi_l^-\left(\widehat{\left(f_+^1(0)\right)}_l-\widetilde{(f_+^1(0))_l}\right),
\end{align*}
and
\be
\|P_M(\p_s\Psi_-^{1,n}(0))-I_M(\dot{\Psi}_-^{1,n}(0))\|_{L^2}\leq
\|P_M(f_+^1(0))-I_M(f_+^1(0))\|_{L^2}\lesssim h^{m_0}.
\ee
Noticing that $|\sin(s)-s|\leq \frac{s^2}{2}$ ($s\in\Bbb R$), we have
\begin{align*}
\left|\delta_l^--\frac{2\sin(\mu_l^2\tau/2)}{\delta_l^+\tau}\right|=
\frac{2}{\delta_l^+}\left|\frac12\mu_l^2-\frac{\sin(\mu_l^2\tau/2)}{\tau}\right|
\lesssim \mu_l^4\tau,\quad l=-\frac{M}{2},\ldots,\frac{M}{2}-1,
\end{align*}
which leads to
\begin{align*}
\left|\widehat{(\p_s\Psi_+^{1,n})}_l(0)-\widetilde{(\dot{\Psi}_+^{1,n})}(0)\right|\lesssim&
\frac{1}{\tau}\left|\widehat{(\Psi_+^{1,n}(0))}_l
-\widetilde{(\Psi_+^{1,n}(0))}_l\right|
+\left|\widehat{\left(f_+^1(0)\right)}_l-\widetilde{(f_+^1(0))}_l\right|
+\tau\mu_l^4\left|\widehat{(\Psi_+^{1,n}(0))}_l\right|,
\end{align*}
and for $m_0\ge4$,
\begin{align*}
&\|P_M(\p_s\Psi_+^{1,n}(0))-I_M(\dot{\Psi}_+^{1,n}(0))\|_{L^2}\\
&\ \lesssim\frac{1}{\tau}\|P_M(\Psi_+^{1,n}(0))-I_M(\Psi_+^{1,n}(0))\|_{L^2}
+\tau\|P_M(\Psi_+^{1,n}(0))\|_{H^4}
+\|P_M(f_+^1(0))-I_M(f_+^1(0))\|_{L^2}\\
&\ \lesssim h^{m_0}+\tau+h^{m_0}/\tau.
\end{align*}
Combining the above estimates with \eqref{eq:eta_2-1}, we derive
\be\label{eq:eta-err}
\|\eta_-^{1}(\cdot)\|_{L^2}\lesssim \tau(h^{m_0}+\tau^2)+\tau^2(h^{m_0}+h^{m_0}/\tau+\tau)
\lesssim \tau(h^{m_0}+\tau^2).
\ee
By the same procedure, $\|\eta_+^{1}(\cdot)\|_{L^2}$ can be bounded as
\begin{align*}
\|\eta_+^1(\cdot)\|_{L^2}\lesssim&\tau(\tau^2+h^{m_0})+
\tau^2\|P_M(\p_s\Psi_-^{1,n}(0))-P_M(\Psi_-^{1,n}(\tau))/\tau\|_{L^2}\\
&+\tau\|P_M(\Psi_-^{1,n}(\tau))-I_M(\Psi_-^{1,n}(\tau))\|_{L^2},
\end{align*}
where Taylor expansion gives
\begin{align*}
\p_s\Psi_-^{1,n}(0)-\Psi_-^{1,n}(\tau)/\tau=-\tau\int_0^1\int_0^s\p_{ss}\Psi_{-}^{1,n}(s_1\tau)\,ds_1ds.
\end{align*}
Thus, recalling Lemma \ref{lem:msregularity}, we estimate
\be\label{eq:eta-err2}
\|\eta_+^1(\cdot)\|_{L^2}\lesssim\tau(\tau^2+h^{m_0})+\tau^3
\|\p_{ss}\Psi_{-}^{1,n}(\cdot)\|_{L^\infty([0,\tau];(L^2)^2)}
\lesssim \tau\left(h^{m_0}+\frac{\tau^2}{\eps^2}\right).
\ee
Now, Lemma \ref{lem:msregularity} together with \eqref{eq:tl:1},
\eqref{eq:tl:2}, \eqref{eq:eta-2}, \eqref{eq:eta_2-1}
and \eqref{eq:eta-err2} implies
\begin{align}
\|\xi_{\pm}^{1,n}(\cdot)\|_{L^2}\lesssim &\,\tau^3\left(\|\p_{ss}(W(t_n+s)\Psi_{+}^{1,n}(s))\|_{L^\infty([0,\tau];(L^2)^2)}+
\|\p_{ss}(W(t_n+s)\Psi_{-}^{1,n}(s))\|_{L^\infty([0,\tau];(L^2)^2)}\right)\nonumber\\
&+\|\eta_{\pm}^{1}(\cdot)\|_{L^2}
\lesssim \tau\left(h^{m_0}+\frac{\tau^2}{\eps^2}\right).\label{eq:xierro-1}
\end{align}

\noindent{\it Type II estimate}. Now we prove the second estimate
 for $\xi_{\pm}^{1,n}(x)$ in \eqref{eq:localerr}. Starting from \eqref{eq:error-1}
and \eqref{eq:error-2}, we treat the terms involving $f_{+}^{1}(s)$,
$\dot{f}_+^{1}(s)$ and $g_+^1(s)$ in the same way as in proving \eqref{eq:xierro-1},
and leave the rest terms as
\begin{align*}
\widehat{(\xi_-^{1,n})}_l=&\widehat{(\zeta_-^{1})}_l-i\int_{0}^{\tau}
e^{i\delta_l^+(\tau-s)/\eps^2}\;\Pi_l^-\widehat{\left(f^{1}_-(s)\right)}_l\,ds
+\overline{q_l^+(\tau)}\;\Pi_l^-\widetilde{\left(\dot{f}_-^{1}(0)\right)}_l,\nonumber\\
\widehat{(\xi_{+}^{1,n})}_l=&\widehat{(\zeta_{+}^{1})}_l-i\int_0^{\tau}
e^{-i\delta_l^-(\tau-s)}\;\Pi_l^+\widehat{\left(f^{1}_-(s)\right)}_l\,ds-
q_l^-(\tau)\;\Pi_l^+\widetilde{\left(\dot{f}_-^{1,*}(\tau)\right)}_l,\nonumber
\end{align*}
with $\zeta_{\pm}^1(x)=\sum\limits_{l=-M/2}^{M/2-1}
\widehat{(\zeta_{\pm}^1)}_le^{i\mu_l(x-a)}$ satisfying
\begin{equation*}
\|\zeta_{\pm}^{1}(\cdot)\|_{L^2}\lesssim\tau(h^{m_0}+\tau^2).
\end{equation*}
The proof of the above decomposition and the corresponding error
bounds for $\zeta_{\pm}^1(x)$ is identical to the proof of \eqref{eq:xierro-1}, and
thus is omitted here for brevity. Applying triangle inequality and \eqref{eq:coefbd}, we have
\begin{align*}
\left|\widehat{(\xi_-^{1,n})}_l\right|\leq& \left|
\widehat{(\zeta_-^{1})}_l\right|+\int_{0}^{\tau}\left|
\widehat{\left(f^{1}_-(s)\right)}_l\right|\,ds
+\tau\eps^2\left|\widetilde{\left(\dot{f}_-^{1}(0)\right)}_l\right|,\\
\left|\widehat{(\xi_{+}^{1,n})}_l\right|\leq&\left|\widehat{(\zeta_{+}^{1})}_l
\right|+\int_0^{\tau}\left|\widehat{\left(f^{1}_-(s)\right)}_l\right|\,ds+
\tau^2\left|\widetilde{\left(\dot{f}_-^{1,*}(\tau)\right)}_l\right|.
\end{align*}
Recalling Lemma \ref{lem:msregularity} which implies
$\|\Psi_-^{1,n}(s)\|_{H^{m_0-1}_p}\lesssim\eps^2$,
we  know $\|f_-^1(s)\|_{L^2}\lesssim \eps^2$,
$\|\dot{f}_-^{1,*}(\tau)\|_{H_p^{m_0-1}}\lesssim\eps^2/\tau$ and
\begin{align*}
\|\dot{f}^1_-(0)\|_{H^{m_0-1}}\lesssim\|\dot{\Psi}^{1,n}_-(0)\|_{H^{m_0-1}}
\lesssim \|I_M(f_{+}^{1}(0))\|_{H^{m_0-1}}\lesssim\|f_{+}^{1}(0)\|_{H^{m_0}}\lesssim1.
\end{align*}
Hence, using the Parseval's theorem, we get
 \begin{align*}
\left\|\xi_-^{1,n}(\cdot)\right\|_{L^2}\lesssim&
\left\|\zeta_-^{1}(\cdot)\right\|_{L^2}+\tau\left\|f^{1}_-(\cdot)
\right\|_{L^\infty([0,\tau];(L^2)^2)}
+\tau\eps^2\left\|I_M(\dot{f}_-^{1}(0))\right\|_{L^2}
\lesssim\tau(h^{m_0}+\tau^2+\eps^2),\\
\left\|\xi_{+}^{1,n}(\cdot)\right\|_{L^2}\lesssim&\left\|\zeta_{+}^{1}(\cdot)
\right\|_{L^2}+\tau\left\|f^{1}_-(\cdot)\right\|_{L^\infty([0,\tau];(L^2)^2)}
+\tau^2\left\|I_M(\dot{f}_-^{1,*}(\tau))\right\|_{L^2}\lesssim \tau(h^{m_0}+\tau^2+\eps^2),
\end{align*}
which, together with \eqref{eq:xierro-1}, completes the proof for \eqref{eq:localerr}.
\end{proof}

\smallskip

Subtracting  \eqref{eq:scheme:3} from \eqref{eq:localerr-def},
noticing \eqref{eq:scheme:4} and \eqref{eq:localerr-def-2},
we get error equations for $\bez_{\pm}^{k,n+1}(x)$ ($k=1,2$) in \eqref{eq:ez-def} as
\begin{equation}\label{eq:erroreq}
\begin{cases}
\widetilde{(\bez_-^{1,n+1})}_l=\widetilde{(\calF^{1,n}_-)}_l+\widehat{(\xi_-^{1,n})}_l\,,\quad
\widetilde{(\bez_{+}^{1,n+1})}_l=e^{-i\frac{\delta_l^-\tau}{\eps^2}}\Pi_l^+
\widetilde{(\bee^n)}_l+\widetilde{(\calF^{1,n}_+)}_l+\widehat{(\xi_+^{1,n})}_l\,,\\
\widetilde{(\bez_+^{2,n+1})}_l=\widetilde{(\calF^{2,n}_+)}_l
+\widehat{(\xi_+^{2,n})}_l\,,\quad
\widetilde{(\bez_{-}^{2,n+1})}_l=e^{i\frac{\delta_l^-\tau}{\eps^2}}\Pi_l^-
\widetilde{(\bee^n)}_l+\widetilde{(\calF^{2,n}_-)}_l+\widehat{(\xi_-^{2,n})}_l\,,
\end{cases}
\end{equation}
where $\calF^{k,n}_{\pm}(x)=\sum\limits_{l=-M/2}^{M/2-1}
\widetilde{(\calF^{k,n}_{\pm})}_le^{i\mu_l(x-a)}$ ($k=1,2$) is given by
\be\label{eq:Fdef}
\begin{cases}
\widetilde{\left(\calF_-^{1,n}\right)}_l&=-\overline{p_l^+(\tau)}\Pi_l^-
\widetilde{\left(F_+^{1,n}\right)}_l
-\overline{q_l^+(\tau)}\Pi_l^-\widetilde{\left(G_+^{1,n}\right)}_l
-\overline{q_l^+(\tau)}\Pi_l^-\left(\widetilde{\left(\dot{F}_+^{1,n}\right)}_l+
\widetilde{\left(\dot{F}_-^{1,n}\right)}_l\right),\\
\widetilde{\left(\calF_+^{2,n}\right)}_l&=p_l^+(\tau)\Pi_l^+
\widetilde{\left(F_-^{2,n}\right)}_l+
q_l^+(\tau)\Pi_l^+\widetilde{\left(G_-^{2,n}\right)}_l
+q_l^+(\tau)\Pi_l^+\left(\widetilde{\left(\dot{F}_+^{2,n}\right)}_l+
\widetilde{\left(\dot{F}_-^{2,n}\right)}_l\right),\\
\widetilde{\left(\calF_+^{1,n}\right)}_l&=p_l^-(\tau)\Pi_l^+
\widetilde{\left(F_+^{1,n}\right)}_l+
q_l^-(\tau)\Pi_l^+\widetilde{\left(G_+^{1,n}\right)}_l +q_l^-(\tau)\Pi_l^+\left(\widetilde{\left(\dot{F}_+^{1,n}\right)}_l+
\widetilde{\left(\dot{F}_-^{1,*}\right)}_l\right),\\
\widetilde{\left(\calF_-^{2,n}\right)}_l&= -\overline{p_l^-(\tau)}\Pi_l^-
\widetilde{\left(F_-^{2,n}\right)}_l
-\overline{q_l^-(\tau)}\Pi_l^-\widetilde{\left(G_-^{2,n}\right)}_l-
\overline{q_l^-(\tau)}\Pi_l^-\left(\widetilde{\left(\dot{F}_+^{2,*}\right)}_l+
\widetilde{\left(\dot{F}_-^{2,n}\right)}_l\right),
\end{cases}
\ee
with  $\dot{F}_{\pm}^{k,n}(x)=\sum\limits_{l=-M/2}^{M/2-1}
\widetilde{(\dot{F}^{k,n}_{\pm})}_le^{i\mu_l(x-a)}\in Y_M$ ($k=1,2$),
$G_{\pm}^{k_\pm,n}(x)=\sum\limits_{l=-M/2}^{M/2-1}
\widetilde{(G^{k_\pm,n}_{\pm})}_le^{i\mu_l(x-a)}\in Y_M$
$F_{\pm}^{k_{\pm},n}(x)=\sum\limits_{l=-M/2}^{M/2-1}
\widetilde{(F^{k_\pm,n}_{\pm})}_le^{i\mu_l(x-a)}\in Y_M$,
 and $\dot{F}_{\pm}^{k_{\mp},*}(x)=\sum\limits_{l=-M/2}^{M/2-1}
 \widetilde{(\dot{F}^{k_{\mp},*}_{\pm})}_le^{i\mu_l(x-a)}\in Y_M$ ($k_+=1,k_-=2$) defined as
\be\label{eq:Fdef2}
\begin{split}
&\widetilde{(F^{k_{\pm},n}_{\pm})}_l=
\widetilde{(f_{\pm}^{k_{\pm}}(0))}_l-\widetilde{(f_{\pm}^{k_{\pm}})}_l,
\quad \widetilde{(\dot{F}^{k,n}_{\pm})}_l=
\widetilde{(\dot{f}_{\pm}^{k}(0))}_l-\widetilde{(\dot{f}_{\pm}^k)}_l,\quad
\widetilde{(G^{k_\pm,n}_{\pm})}_l=
\widetilde{(g_{\pm}^{k_\pm}(0))}_l-\widetilde{(g_{\pm}^{k_\pm})}_l,\\
&\widetilde{(\dot{F}^{1,*}_{-})}_l=
\widetilde{(f_{-}^{1,*}(0))}_l-\widetilde{(f_{-}^{1,*})}_l,\quad
\widetilde{(\dot{F}^{2,*}_{+})}_l=
\widetilde{(f_{+}^{2,*}(0))}_l-\widetilde{(f_{+}^{2,*})}_l,\quad l=-\frac{M}{2},\ldots,\frac{M}{2}-1.
\end{split}
\ee
For the electromagnetic error part
$\calF_{\pm}^{k,n}(x)$ ($k=1,2$, $0\leq n\leq \frac{T}{\tau}-1$), we have the lemma below.
\begin{lemma}\label{lem:F} Under the assumptions (A) and (B),
the electromagnetic error part $\calF_{\pm}^{k,n}(x)\in Y_M$ ($k=1,2$,
$0\leq n\leq\frac{T}{\tau}-1$) defined in \eqref{eq:Fdef} with \eqref{eq:Fdef2} satisfies
the bounds as
\begin{align*}
&\|F_{\pm}^{k_{\pm},n}(\cdot)\|_{L^2}+\|G^{k_{\pm},n}_{\pm}(\cdot)\|_{L^2}+
\|\dot{F}_{\pm}^{3-k_{\pm},n}(\cdot)\|_{L^2}\lesssim h^{m_0}+
\|\bee^n(\cdot)\|_{L^2},\quad k_+=1,\quad k_-=2,\\
&
\|\dot{F}_+^{1,n}(\cdot)\|_{L^2}+\|\dot{F}_-^{2,n}(\cdot)\|_{L^2}\lesssim \frac{1}{\tau}(h^{m_0}+\|\bee^n(\cdot)\|_{L^2}),
\quad \|\dot{F}_{\pm}^{k_{\mp},*}(\cdot)\|_{L^2}\lesssim \frac{1}{\tau}(h^{m_0}+\|\bez_{\pm}^{k_{\mp},n+1}(\cdot)\|_{L^2}),
\end{align*}
which implies that
\begin{align}\label{eq:Fbound}
&\|\calF_{\pm}^{k_{\pm},n}(\cdot)\|_{L^2}\lesssim \tau(h^{m_0}+\|\bez_\mp^{k_{\pm},n+1}(\cdot)\|_{L^2}+\|\bee^n(\cdot)\|_{L^2}),\quad
\|\calF_{\pm}^{k_{\mp},n}(\cdot)\|_{L^2}\lesssim \tau(h^{m_0}+\|\bee^n(\cdot)\|_{L^2}).
\end{align}
\end{lemma}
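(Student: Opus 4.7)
The plan is to first establish the stated integral bounds on the six ingredient error functions $F_\pm^{k_\pm,n}$, $G_\pm^{k_\pm,n}$, $\dot F_\pm^{k,n}$, and $\dot F_\pm^{k_\mp,*}$ individually, and then to assemble them via Parseval's identity and the Fourier-multiplier bounds \eqref{eq:coefbd}.

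For the regular ingredients $F_\pm^{k_\pm,n}$, $G_\pm^{k_\pm,n}$ and $\dot F_\pm^{3-k_\pm,n}$, I would observe that each has the form of an interpolation $I_M[\omega(t_n,\cdot)\,\Pi_\pm(\Phi(t_n,\cdot)-I_M\Phi^n)]$, where $\omega$ is $W$, $\p_tW$, or a multiple of $W$. Splitting via the triangle inequality $\Phi(t_n)-I_M\Phi^n=(\Phi(t_n)-P_M\Phi(t_n))+\bee^n$ and using assumptions (A)--(B) together with Lemma~\ref{lem:msregularity}, the first piece contributes an $h^{m_0}$ projection error, while the second piece, after multiplication by the uniformly bounded operators $\Pi_\pm$ and $\omega$, contributes $\lesssim\|\bee^n\|_{L^2}$, using $\|I_Mg\|_{L^2}=\|g\|_{l^2}$ from \eqref{eq:normeq} together with $\|\Pi_l^\pm\|_{l^2}\le 1$ and the $L^\infty$-bound on $\omega$ from assumption (A).

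For the singular ingredients $\dot F_+^{1,n}$ and $\dot F_-^{2,n}$, the filter $2\sin(\mu_l^2\tau/2)/(\delta_l^+\tau)$ appearing in \eqref{eq:localerr-def-3} is only bounded by $1/\tau$ in the Fourier-multiplier norm, which is precisely what produces the $1/\tau$ prefactor in the stated estimate; otherwise the same splitting as above applies. The starred ingredients $\dot F_\pm^{k_\mp,*}$ are more transparent: $\dot f_-^{1,*}(\tau)=\tfrac{1}{\tau}W(t_n)\Psi_-^{1,n}(\tau)$ and its numerical counterpart $\tfrac{1}{\tau}W^n\Psi_-^{1,n+1}$ at grid points differ by $\tfrac{1}{\tau}W(t_n)(\Psi_-^{1,n}(\tau)-\Psi_-^{1,n+1})$, which via the same projection-versus-interpolation split produces $\tfrac{1}{\tau}(h^{m_0}+\|\bez_-^{1,n+1}\|_{L^2})$, and symmetrically for $\dot F_+^{2,*}$.

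Finally, to conclude \eqref{eq:Fbound} I would combine the above by Parseval: since $|p_l^\pm(\tau)|\lesssim\tau$ and $|q_l^\pm(\tau)|\lesssim\tau^2$ uniformly in $l$ and $\eps$ by \eqref{eq:coefbd}, the products $p_l^\pm\cdot F$ and $q_l^\pm\cdot G$ contribute $\tau(h^{m_0}+\|\bee^n\|_{L^2})$, while the delicate products $q_l^\pm\cdot\dot F_+^{1,n}$, $q_l^\pm\cdot\dot F_-^{2,n}$ and $q_l^\pm\cdot\dot F^*$ have their $1/\tau$ singularity precisely absorbed by the $\tau^2$ from $q_l^\pm$, leaving a net $\tau$ factor and yielding $\tau(h^{m_0}+\|\bee^n\|_{L^2})$ or $\tau(h^{m_0}+\|\bez_\mp^{k_\pm,n+1}\|_{L^2})$ as appropriate. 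Inspection of \eqref{eq:Fdef} shows that the starred ingredients appear only in $\calF_+^{1,n}$ and $\calF_-^{2,n}$, which is exactly the asymmetry recorded in \eqref{eq:Fbound}. The main obstacle is the bookkeeping of singularity cancellations: each $q_l^\pm$ multiplier must be paired with the correct ingredient so that every $1/\tau$ blowup is matched by a $\tau^2$ from the coefficient; once this pairing is organized, the individual Parseval estimates follow directly from \eqref{eq:normeq} and standard Fourier approximation.
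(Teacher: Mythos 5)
Your proposal is correct and follows essentially the same route as the paper: Parseval plus the $L^\infty$ control on $W$, $\partial_tW$ and $\|\Pi_l^\pm\|\le1$; the interpolation-versus-projection split of $\Phi(t_n)-\Phi_I^n$ into an $O(h^{m_0})$ tail and $\bee^n$; the observation that the filter $2\sin(\mu_l^2\tau/2)/(\delta_l^+\tau)$ is an $O(1/\tau)$ multiplier (this is exactly why $\dot F_+^{1,n},\dot F_-^{2,n}$ carry the $1/\tau$); and finally the $|p_l^\pm|\lesssim\tau$, $|q_l^\pm|\lesssim\tau^2$ coefficient bounds from \eqref{eq:coefbd} to absorb the singular $1/\tau$ factors. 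One cosmetic imprecision: the non-singular derivative terms $\dot F_\pm^{3-k_\pm,n}$ are not literally of the form $I_M[\omega\,\Pi_\pm(\Phi(t_n)-\Phi_I^n)]$ --- by \eqref{eq:localerr-def-3} and \eqref{eq:scheme:5} they involve an extra layer, $\dot\Psi_-^{1,n}=-i\Pi_-(W\Psi_+^{1,n})$, so the paper reduces them first to $\|I_M(f_+^1(0))-I_M(f_+^1)\|_{L^2}$ and only then applies the split. This two-step reduction is routine and does not change the estimate, so the overall argument is sound.
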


\begin{proof}Recalling the assumptions (A) and (B), Lemma \ref{lem:msregularity}, \eqref{eq:Fdef2}, \eqref{eq:localerr-def-2}, \eqref{eq:localerr-def-3}, \eqref{eq:scheme:4} and
\eqref{eq:scheme:5}, applying the Parseval's theorem,
we have
\begin{align*}
\|F^{1,n}_{+}(\cdot)\|_{L^2}^2\leq&\|I_M(f_{+}^1(0))-I_M(f_{+}^1)\|_{L^2}^2
=h\sum\limits_{j=0}^{M-1}\left|W(t_n,x_j)(\Psi_{+}^{1,n}(0,x_j)-\Psi_{+,j}^1)\right|^2\\
\lesssim& h\sum\limits_{j=0}^{M-1}\left|\Psi_{+}^{1,n}(0,x_j)-\Psi_{+,j}^1\right|^2\lesssim h^{2m_0}+\|P_M(\Phi(t_n))-I_M(\Phi^n)\|_{L^2}^2
\lesssim h^{2m_0}+\|\bee^n(\cdot)\|_{L^2}^2,
\end{align*}
and  similarly we have $\|F^{2,n}_{-}(\cdot)\|_{L^2}\lesssim h^{m_0}+\|\bee^n(\cdot)\|_{L^2}$.
Using the same idea, we can obtain
\begin{equation*}
\|G^{1,n}_{+}(\cdot)\|_{L^2}+\|G_-^{2,n}(\cdot)\|_{L^2}\lesssim h^{m_0}+\|\bee^n(\cdot)\|_{L^2},
\end{equation*}
and
\begin{align*}
\|F^{1,*}_{-}(\cdot)\|_{L^2}\lesssim &\|I_M(f_-^{1,*}(\tau))-I_M(f_-^{1,*})\|_{L^2}
\lesssim \frac{1}{\tau}\|I_M(\Psi_-^{1,n}(\tau))-I_M(\Psi_-^{1,n+1})\|_{L^2}\\
\lesssim& \frac{1}{\tau}(h^{m_0}+\|\bez_{-}^{1,n+1}(\cdot)\|_{L^2}),\\
\|F^{2,*}_{+}(\cdot)\|_{L^2}\lesssim &\|I_M(f_+^{2,*}(\tau))-I_M(f_+^{2,*})\|_{L^2}
\lesssim\frac{1}{\tau}(h^{m_0}+\|\bez_{+}^{2,n+1}(\cdot)\|_{L^2}).
\end{align*}
It remains to estimate $\|\dot{F}^{k,n}_{\pm}(\cdot)\|_{L^2}$.
Again, in the same spirit of the above arguments, we arrive at
\begin{equation}\label{eq:dotfestimate}
\|\dot{F}^{k,n}_{\pm}(\cdot)\|_{L^2}\lesssim
\|I_M(\dot{\Psi}_{\pm}^{k,n}(0))-I_M(\dot{\Psi}_{\pm}^{k})\|_{L^2}.
\end{equation}
Comparing \eqref{eq:localerr-def-3} with \eqref{eq:scheme:5},
noticing the properties of $\delta_l^{\pm}$ and the arguments in the above proof, we find
\begin{align*}
\|I_M(\dot{\Psi}_{-}^{1,n}(0))-I_M(\dot{\Psi}_{-}^{1})\|_{L^2}
\lesssim& \|I_M(f_+^{1}(0))-I_M(f_+^{1})\|_{L^2}\lesssim h^{m_0}+\|\bee^n(\cdot)\|_{L^2},\\
\|I_M(\dot{\Psi}_{+}^{2,n}(0))-I_M(\dot{\Psi}_{+}^{2})\|_{L^2}
\lesssim &\|I_M(f_-^{2}(0))-I_M(f_-^{2})\|_{L^2}\lesssim h^{m_0}+\|\bee^n(\cdot)\|_{L^2},\\
\|I_M(\dot{\Psi}_{+}^{1,n}(0))-I_M(\dot{\Psi}_{+}^{1})\|_{L^2}
\lesssim&\frac{1}{\tau}\|I_M(\Psi_{+}^{1,n}(0))-
I_M(\Psi_{+}^{1})\|_{L^2}+\|I_M(f_+^{1}(0))-I_M(f_+^{1})\|_{L^2}\\
\lesssim&\frac{1}{\tau}\left(h^{m_0}+
\|\bee^n(\cdot)\|_{L^2}\right)+h^{m_0}+\|\bee^n(\cdot)\|_{L^2},\\
\|I_M(\dot{\Psi}_{-}^{2,n}(0))-I_M(\dot{\Psi}_{-}^{2})\|_{L^2}
\lesssim&\frac{1}{\tau}\|I_M(\Psi_{-}^{2,n}(0))-
I_M(\Psi_{-}^{2})\|_{L^2}+\|I_M(f_-^{2}(0))-I_M(f_-^{2})\|_{L^2}\\
\lesssim&\frac{1}{\tau}\left(h^{m_0}+
\|\bee^n(\cdot)\|_{L^2}\right)+h^{m_0}+\|\bee^n(\cdot)\|_{L^2},
\end{align*}
which implies the bounds for $\|\dot{F}^{k,n}_{\pm}(\cdot)\|_{L^2}$ in
view of \eqref{eq:dotfestimate}. Combining all the above results,
recalling \eqref{eq:Fdef2} and properties of the
coefficients $p^{\pm}_l(\tau)$ and $q^{\pm}_l(\tau)$ in \eqref{eq:coefbd}, we conclude that
\eqref{eq:Fbound} holds.
\end{proof}

\smallskip

Now, we are ready to prove the main theorem.

{\it Proof of Theorem \ref{thm:main}.}
Recalling the decomposition \eqref{eq:dec:error} and the
error equation \eqref{eq:erroreq}, we  get
\begin{align}
\widetilde{(\bee^{n+1})}_l=&e^{-i\tau/\eps^2}\left(
\widetilde{\left(\bez_+^{1,n+1}\right)}_l+\widetilde{\left(\bez_-^{1,n+1}\right)}_l\right)
+e^{i\tau/\eps^2}\left(\widetilde{\left(\bez_+^{2,n+1}\right)}_l+
\widetilde{\left(\bez_-^{2,n+1}\right)}_l\right)\nonumber\\
=&\left(e^{-i\delta_l/\eps^2}\Pi_l^+\widetilde{(\bee^n)}_l+
e^{i\delta_l/\eps^2}\Pi_l^-\widetilde{(\bee^n)}_l\right)+
\widetilde{(\chi^n)_l},\label{eq:egrowth}
\end{align}
with $\chi^n(x)=\sum\limits_{l=-M/2}^{M/2-1}\widetilde{(\chi^n)}_le^{i\mu_l(x-a)}\in Y_M$ given as
\be\label{eq:chi-def}
\widetilde{(\chi^n)}_l=\sum\limits_{k=1,2}e^{i\tau(2k-3)/\eps^2}\left(\widetilde{(\calF_+^{k,n})}_l
+\widetilde{(\calF_-^{k,n})}_l+\widehat{(\xi_+^{k,n})}_l+\widehat{(\xi_-^{k,n})}_l\right),\quad l=-\frac{M}{2},\ldots,\frac{M}{2}-1.
\ee
In particular $\|\bee^{0}(\cdot)\|_{L^2}=\|P_M(\Phi_0)-I_M(\Phi_0)\|_{L^2}\lesssim h^{m_0}$.

Taking the $l^2$ norm of the vectors in the error equation \eqref{eq:erroreq}, then summing together for $l=-M/2,\ldots,M/2-1$, utilizing  Lemma \ref{lem:F} and Parserval's theorem, there holds for $0<\tau\leq1$,
\begin{align*}
\|\bez_-^{1,n+1}(\cdot)\|_{L^2}\lesssim&\|\calF_-^{1,n}(\cdot)\|_{L^2}+\|\xi_-^{1,n}(\cdot)\|_{L^2}\lesssim
\tau(h^{m_0}+\|\bee^n(\cdot)\|_{L^2})+\|\xi_-^{1,n}(\cdot)\|_{L^2},\\
\|\bez_+^{2,n+1}(\cdot)\|_{L^2}\lesssim&\|\calF_+^{2,n}(\cdot)\|_{L^2}+\|\xi_+^{2,n}(\cdot)\|_{L^2}\lesssim
\tau(h^{m_0}+\|\bee^n(\cdot)\|_{L^2})+\|\xi_+^{2,n}(\cdot)\|_{L^2},
\end{align*}
and so
\begin{align*}
\|\chi^n(\cdot)\|\lesssim&\tau(h^{m_0}+\|\bee^n(\cdot)\|_{L^2}+\|\bez_-^{1,n+1}(\cdot)
\|_{L^2}+\|\bez_+^{2,n+1}(\cdot)\|_{L^2})+
\sum\limits_{k=1,2}\left(\|\xi_+^{k,n}(\cdot)\|_{L^2}+\|\xi_-^{k,n}(\cdot)\|_{L^2}\right)\\
\lesssim&\tau(h^{m_0}+\|\bee^n(\cdot)\|_{L^2})+
\sum\limits_{k=1,2}\left(\|\xi_+^{k,n}(\cdot)\|_{L^2}+\|\xi_-^{k,n}(\cdot)\|_{L^2}\right).
\end{align*}
From Lemma \ref{lem:localerr} on the local truncation error $\xi_{\pm}^{k,n}(x)$, we get
\begin{align}
\label{eq:chi:1}\|\chi^n(\cdot)\|_{L^2}\lesssim&
\tau\|\bee^{n}(\cdot)\|_{L^2}+\tau(h^{m_0}+\tau^2/\eps^2),\quad 0\leq n\leq\frac{T}{\tau}-1,\\
\label{eq:chi2:}\|\chi^n(\cdot)\|_{L^2}\lesssim&
\tau\|\bee^{n}(\cdot)\|_{L^2}+\tau(h^{m_0}+\tau^2+\eps^2),\quad 0\leq n\leq\frac{T}{\tau}-1.
\end{align}
Now, taking the $l^2$ norm of the vectors on both sides of \eqref{eq:egrowth},
making use of the orthogonal properties of $\Pi_{l}^{\pm}$ where
$\left|e^{i\theta_1}\Pi_l^+{\bf v}+e^{i\theta_2}\Pi_l^-{\bf v}\right|^2=
|\Pi_l^+{\bf v}|^2+|\Pi_l^-{\bf v}|^2=|{\bf v}|^2$ for all ${\bf v}\in
\mathbb{C}^2,\theta_1,\theta_2\in\Bbb R$,  we
can have
\begin{align*}
\left|\widetilde{(\bee^{n+1})}_l\right|^2=&\left|e^{-i\delta_l/\eps^2}
\Pi_l^+\widetilde{(\bee^n)}_l+e^{i\delta_l/\eps^2}\Pi_l^-
\widetilde{(\bee^n)}_l\right|^2+|\widetilde{(\chi^n)}_l|^2\\
&+2\text{Re}\left((e^{-i\delta_l/\eps^2}\Pi_l^+\widetilde{(\bee^n)}_l+
e^{i\delta_l/\eps^2}\Pi_l^-\widetilde{(\bee^n)}_l)^*\widetilde{(\chi^n)}_l\right)\\
=&|\widetilde{(\bee^n)}_l|^2+|\widetilde{(\chi^n)}_l|^2
+2\text{Re}\left((e^{-i\delta_l/\eps^2}\Pi_l^+\widetilde{(\bee^n)}_l+
e^{i\delta_l/\eps^2}\Pi_l^-\widetilde{(\bee^n)}_l)^*\widetilde{(\chi^n)}_l\right),
\end{align*}
where  $Re(c)$ denotes the real part of the complex number $c$.
Applying Cauchy inequality, we find
\be\label{eq:eg:1}
\left|\widetilde{(\bee^{n+1})}_l\right|^2-\left|\widetilde{(\bee^{n})}_l\right|^2\lesssim \tau|\widetilde{(\bee^n)}_l|^2+\frac{1}{\tau}|\widetilde{(\chi^n)}_l|^2,\quad
l=-\frac{M}{2},\ldots,\frac{M}{2}-1.
\ee
Summing \eqref{eq:eg:1} together for $l=-\frac{M}{2},\ldots,\frac{M}{2}-1$
and using Parseval's theorem, we obtain
\be\label{eq:eg:2}
\|\bee^{n+1}(\cdot)\|_{L^2}^2-\|\bee^{n}(\cdot)\|_{L^2}^2\lesssim\tau\|\bee^{n}
(\cdot)\|_{L^2}^2+\frac{1}{\tau}\|\chi^n(\cdot)\|^2_{L^2},\quad 0\leq n\leq \frac{T}{\tau}-1.
\ee
Summing \eqref{eq:eg:2} for indices  $0,1,\ldots,n$ and using
\eqref{eq:chi:1}-\eqref{eq:chi2:}, we derive that for $ 0\leq n\leq \frac{T}{\tau}-1$,
\begin{align}
\|\bee^{n+1}(\cdot)\|_{L^2}^2-\|\bee^{0}(\cdot)\|_{L^2}^2\lesssim&\tau
\sum\limits_{m=1}^{n}\|\bee^{m}(\cdot)\|_{L^2}^2
+n\tau\left(h^{m_0}+\frac{\tau^2}{\eps^2}\right)^2,\\
\|\bee^{n+1}(\cdot)\|_{L^2}^2-\|\bee^{0}(\cdot)\|_{L^2}^2\lesssim&\tau
\sum\limits_{m=1}^{n}\|\bee^{m}(\cdot)\|_{L^2}^2
+n\tau(h^{m_0}+\tau^2+\eps^2)^2.
\end{align}
Since $\|\bee^0(\cdot)\|_{L^2}\lesssim h^{m_0}$, Gronwall's
inequality will lead to the conclusion when $0<\tau\le \tau_0\le1$ and $0<h\le h_0\le1$ sufficiently small
\be
\|\bee^{n+1}(\cdot)\|_{L^2}^2\lesssim\left(h^{m_0}+\frac{\tau^2}{\eps^2}\right)^2,
\quad\|\bee^{n+1}(\cdot)\|_{L^2}^2\lesssim(h^{m_0}+\tau^2+\eps^2)^2,\quad 0\leq n\leq \frac{T}{\tau}-1.
\ee
In view of \eqref{eq:error-tri}, we  conclude that \eqref{eq:est:main} holds. $\hfill\Box$

\section{Numerical results}\label{sec:num}
In this section, we present numerical tests on our MTI-FP method \eqref{eq:scheme:1}
and apply it to study numerically the
convergence of the Dirac equation \eqref{SDEdd} to its limiting
Schr\"odinger model \eqref{eq:MD:1st} and the second order limiting  Pauli-type equation model.
To this purpose, we choose the electromagnetic potentials in the Dirac equation (\ref{SDEdd}) with
$d=1$ as
\begin{eqnarray}\label{eq:pot:nm}
A_1(t,x) = \frac{(x+1)^2}{1+x^2},\qquad V(t,x) = \frac{1-x}{1+x^2}, \qquad x\in{\mathbb R}, \quad t\ge0,
\end{eqnarray}
and the initial data $\Phi_0(x)=(\phi_1(x),\phi_2(x))^T$ as
\begin{equation}\label{eq:ini:nm}
\phi_1(x) = e^{-x^2/2}, \quad\phi_2(x) = e^{-(x-1)^2/2}, \qquad x\in{\mathbb  R}.
\end{equation}

\subsection{Accuracy test}
The Dirac equation \eqref{SDEdd} with $d=1$, \eqref{eq:pot:nm} and \eqref{eq:ini:nm}
is solved numerically on an interval $\Omega=(-16, 16)$, i.e. $a=-16$ and $b=16$,
with periodic boundary conditions.
The `reference' solution  $\Phi(t,x)=(\phi_1(t, x),\phi_2(t, x))^T$
is obtained numerically by using the TSFP method \cite{BCJ} with a very fine mesh size and a small time step,
e.g. $h_e = 1/32$ and $\tau_e = 10^{-7}$. Denote $\Phi^n_{h,\tau}$ as the numerical solution
obtained by the MTI-FP method with  mesh size $h$ and time step $\tau$.
In order to quantify the convergence, we introduce
\[e_{h,\tau}(t_n)=\|\Phi^n-\Phi(t_n,\cdot)\|_{l^2}=
\sqrt{h\sum_{j=0}^{M-1}|\Phi^n_j-\Phi(t_n,x_j)|^2}.\]

Tab. \ref{table_spatial} displays the spatial errors $e_{h,\tau}(t=2.0)$ with $\tau=10^{-4}$ for
different $\eps$ and $h$; and Tab. \ref{table_temp} lists the temporal errors $e_{h,\tau}(t=2.0)$
with $h=1/32$ for different $\eps$ and $\tau$. From Tabs. \ref{table_spatial}-\ref{table_temp} and additional
numerical results  not shown here for brevity, we can
draw the following conclusions for the MTI-FP method:

\begin{table}[t!]
\def\temptablewidth{1\textwidth}
\vspace{-12pt}
\caption{Spatial error analysis of
the MTI-FP method for the Dirac equation in 1D. }
{\rule{\temptablewidth}{1pt}}
\begin{tabular*}{\temptablewidth}{@{\extracolsep{\fill}}cccccc}
$e_{h,\tau}(2.0)$ & $h_0=2$   & $h_0$/2   &$h_0/2^2$     &  $h_0/2^3$ & $h_0/2^4$  \\
\hline
$\varepsilon_0=1$ &  1.65  &  5.74E-1  & 7.08E-2  &  7.00E-5 & 8.53E-10  \\
$\varepsilon_0/2$ &  1.39  &  3.45E-1  & 7.06E-3  &  6.67E-6 & 9.71E-10 \\
$\varepsilon_0/2^2$ &  1.18  &  1.67E-1  & 1.71E-3  &  1.43E-6 & 1.10E-9 \\
$\varepsilon_0/2^3$ &  1.13  &  1.46E-1  & 1.03E-3  &  6.77E-7 & 9.16E-10 \\
$\varepsilon_0/2^4$ &  1.15  &  1.45E-1  & 8.52E-4  &  4.86E-7 & 1.33E-9 \\
\end{tabular*}
{\rule{\temptablewidth}{1pt}} \label{table_spatial}
\end{table}

\begin{table}[t!]
\def\temptablewidth{1\textwidth}
\vspace{-12pt}
\caption{Temporal error analysis of
the MTI-FP method for the Dirac equation in 1D. The convergence order is calculated as $\log_2(e_{h,2\tau}/e_{h,\tau})$.}
\begin{tabular}{ c c c c c c c c c c c }
\toprule
\hline
$e_{h,\tau}(2.0)$ & $\tau_0=0.1$ & $\tau_0/2$ & $\tau_0/2^2$
& $\tau_0/2^3$ & $\tau_0/2^4$ & $\tau_0/2^5$ & $\tau_0/2^6$ & $\tau_0/2^7$ & $\tau_0/2^8$  \\
\hline
$\eps_0=1$ & 3.69E-2 & 9.18E-3 & 2.29E-3 & 5.73E-4
& 1.43E-4 & 3.58E-5 & 8.94E-6 & 2.24E-6 & 5.59E-7  \\
order&-&2.01&2.00&2.00&2.00&2.00&2.00&2.00&2.00\\
\hline
$\eps_0/2$ & 5.98E-2 & 1.51E-2 & 3.77E-3 & 9.45E-4
& 2.36E-4 & 5.90E-5 & 1.48E-5 & 3.69E-6 & 9.23E-7 \\
order&-&1.99&2.00&2.00&2.00&2.00&2.00&2.00&2.00\\ \hline
$\eps_0/2^2$ & \underline{1.91E-1} & 5.67E-2 & 1.47E-2 & 3.74E-3
& 9.39E-4 & 2.35E-4 & 5.87E-5 & 1.47E-5 & 3.67E-6  \\
order&-&1.75&1.95&1.97& 1.99& 2.00& 2.00& 2.00&    2.00\\ \hline
$\eps_0/2^3$ & 7.12E-2 & \underline{7.17E-2} & \underline{4.90E-2}
& 1.48E-2 & 3.89E-3 & 9.84E-4 & 2.47E-4 & 6.17E-5 & 1.54E-5  \\
order&-&-0.01&0.55&1.73&1.93&1.98& 1.99&2.00&2.00\\ \hline
$\eps_0/2^4$ & 1.78E-2 & 1.76E-2 & 1.80E-2 & \underline{1.82E-2}
& \underline{1.22E-2} & 3.73E-3 & 9.79E-4 & 2.48E-4 & 6.21E-5  \\
order&-& 0.02& -0.03&   -0.02&    0.58&    1.71&    1.93&    1.98 &2.00\\ \hline
$\eps_0/2^5$ & 7.11E-3 & 3.30E-3 & 4.07E-3 & 4.43E-3 & 4.53E-3
& \underline{4.56E-3} & \underline{3.05E-3} & 9.32E-4 & 2.45E-4 \\
order&-&1.11   &-0.30&-0.12&-0.03&-0.01&0.58&1.71&1.93\\ \hline
$\eps_0/2^6$ & 7.19E-3 & 1.99E-3 & 5.10E-4 & 6.84E-4 & 1.02E-3
& 1.10E-3 & 1.13E-3 & \underline{1.14E-3} & \underline{7.61E-4}  \\
order&-&1.85&1.96&-0.42& -0.58& -0.11&-0.04& -0.01&  0.58\\ \hline
$\eps_0/2^7$ & 7.07E-3 & 1.70E-3 & 4.49E-4 & 2.61E-4 & 8.81E-5
& 1.68E-4 & 2.54E-4 & 2.77E-4 & 2.83E-4  \\
order&-&2.06& 1.92&  0.78&   1.57&  -0.93&   -0.60& -0.13&   -0.03\\ \hline
$\eps_0/2^8$ & 7.05E-3 & 1.71E-3 & 4.23E-4 & 1.09E-4 & 3.91E-5
& 6.01E-5 & 2.18E-5 & 4.20E-5 & 6.35E-5  \\
order&-&2.04    &2.02    &1.96    &1.48   &-0.62    &1.46   &-0.95   &-0.60\\ \hline
$\eps_0/2^9$ & 7.05E-3 & 1.71E-3 & 4.22E-4 & 1.05E-4
& 2.61E-5 & 1.37E-5 & 6.98E-6 & 1.50E-5 & 5.48E-6  \\
order&-&2.04&   2.03&    2.01& 2.01&0.93& 0.97&-1.10&   1.45\\
\hline
\bottomrule
\end{tabular}\label{table_temp}
\end{table}

(i) For the spatial discretization error, the MTI-FP method is
uniformly spectral accurate for all $\eps\in(0,1]$ (cf. Tab. \ref{table_spatial}).

(ii) For the temporal discretization error,
the MTI-FP method is uniformly convergent with linear rate at $O(\tau)$ for $\eps\in(0,1]$.
For any fixed $0<\eps\le 1$, when time step $\tau$ is
small, i.e. $\tau\lesssim \eps^2$ (upper triangle part of Tab. \ref{table_temp}), second order
convergence at $O(\tau^2)$ is achieved; when $\eps$ is small, i.e. $\eps\lesssim \tau$
(lower triangle part of Tab. \ref{table_temp}), again second order convergence at $O(\tau^2)$  is achieved.
However, near the diagonal part where $\tau\sim \eps^2$
(cf. the underlined diagonal part of Tab. \ref{table_temp}),  degeneracy of
the convergence rate  and the uniform linear convergence rate for
the temporal error are observed. In particular,
the underlined errors in Tab. \ref{table_temp} degenerate
in the parameter regime $\tau\sim\eps^2$, which has been predicted by our
error estimates \eqref{eq:est:main}.

\subsection{Convergence of the Dirac equation
\eqref{SDEdd} to its limiting models}
\begin{figure}
\centerline{\psfig{figure=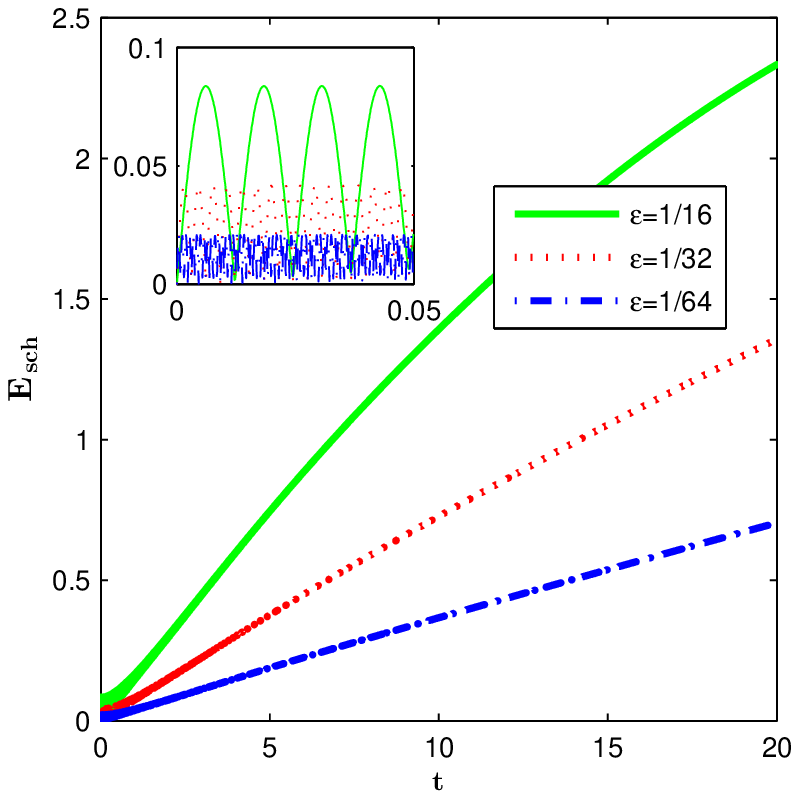,height=6cm,width=7cm}\quad \psfig{figure=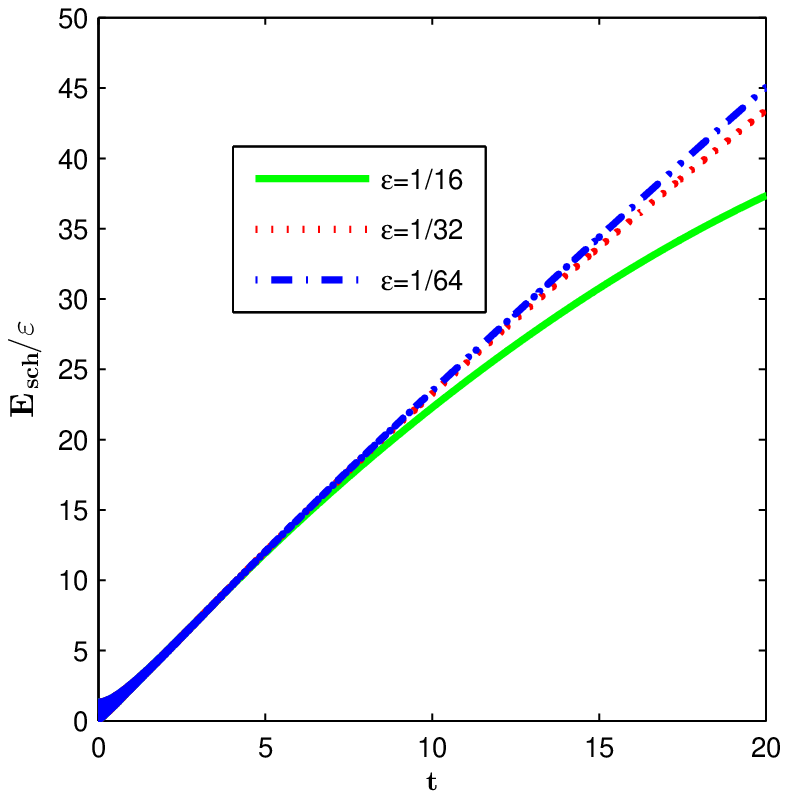,height=6cm,width=7cm}}
\medskip
\centerline{\psfig{figure=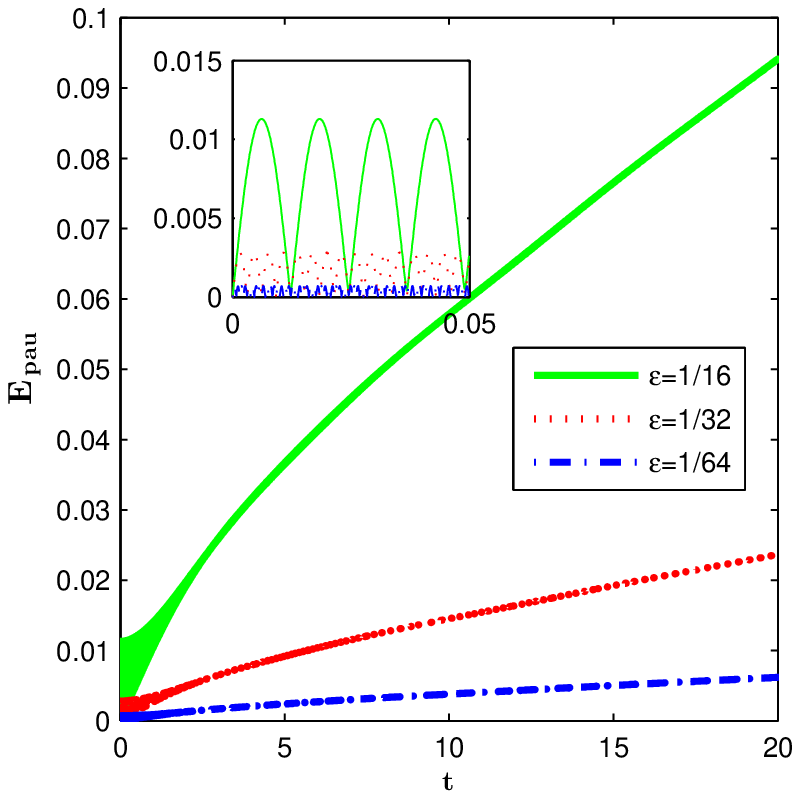,height=6cm,width=7cm}\quad \psfig{figure=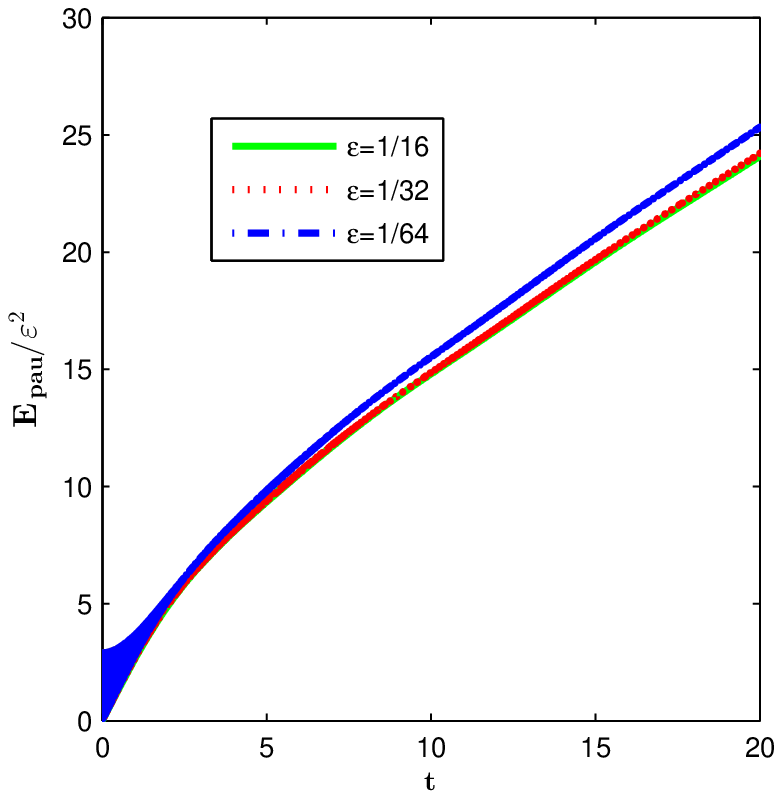,height=6cm,width=7cm}}
\caption{Error functions $E_{\rm sch}(t)$ and $E_{\rm pau}(t)$ for different $\eps$.}\label{fig:0}
\end{figure}
Similarly to \eqref{eq:MD:1st}, when  $\eps\to0^+$, the solution of the Dirac equation \eqref{SDEdd}
 satisfies, i.e. first order limiting model
\be\label{eq:1st}
\Phi(t,x)=e^{-it/\eps^2}\phi_e\,{\bf e}_1+e^{it/\eps^2}
\phi_p\,{\bf e_2}+O(\eps),\quad {\bf e}_1=(1,0)^T,\quad {\bf e}_2=(0,1)^T,
\ee
where $\phi_e:=\phi_e(t,x)\in{\mathbb C}$ and $\phi_p:=\phi_p(t,x)\in{\mathbb C}$
satisfy the Schr\"odinger equations \cite{N,Sch,White,BMP},
\be\label{eq:1st:eq}
i\p_t\phi_e=\left[-\frac{1}{2}\Delta +V(t,x)\right]\phi_e,\qquad i\p_t\phi_p=\left[\frac{1}{2}\Delta+V(t,x)\right]\phi_p,\quad x\in\mathbb{R},\quad t>0,
\ee
and the initial data is determined through \eqref{eq:1st}.

To obtain a second order limiting  Pauli-type equation model,
we formally drop the small components in \eqref{eq:decsys:1}-\eqref{eq:decsys:2} to get
\be\label{eq:2nd}
\Phi(t,x)=e^{-it/\eps^2}\Psi_e(t,x)+e^{it/\eps^2}\Psi_p(t,x)+O(\eps^2),
\ee
where $\Psi_e:=\Psi_e(t,x)\in{\mathbb C}^2$ and
$\Psi_p:=\Psi_p(t,x)\in{\mathbb C}^2$ satisfy the Pauli-type equations
\be\label{eq:2nd:eq}
i\p_t\Psi_e=\frac{1}{\eps^2}\mathcal{D}\Psi_e
+\Pi_+\left(W\Psi_e\right),\quad
i\p_t\Psi_p=-\frac{1}{\eps^2}\mathcal{D}\Psi_p
+\Pi_-\left(W\Psi_p\right),\quad x\in {\mathbb R}, \quad t>0,
\ee
with $\mathcal{D}=\sqrt{I-\eps^2\Delta}-I$ and initial value as
\begin{equation}
\Psi_e(0,x)=\Pi_+\Phi(0,x),\quad \Psi_p(0,x)=\Pi_-\Phi(0,x), \qquad x\in {\mathbb R}.
\end{equation}
To investigate numerically convergence rates of the above
limiting models \eqref{eq:1st} and \eqref{eq:2nd} to the Dirac equation,
we solve numerically the Schr\"odinger equation \eqref{eq:1st:eq} to
obtain $(\phi_e,\phi_p)$ and the Pauli-type equation \eqref{eq:2nd:eq}
to get $(\Psi_e,\Psi_p)$, by the TSFP method \cite{BC1} and the EWI-FP method \cite{BCJ}, respectively.
The solution $\Phi$ of the Dirac equation \eqref{SDEdd}
is computed by the MTI-FP method and we can study convergence rates of Dirac equation
\eqref{SDEdd} to \eqref{eq:1st} and \eqref{eq:2nd}, respectively.
All the computations are done on the bounded interval $\Omega=(-128,128)$
with fine mesh $h=1/16$ and time step $\tau=10^{-4}$, which are sufficiently small such that
the numerical error can be neglected.
In order to quantify the convergence, we introduce the error functions
\begin{equation*}\begin{split}
& E_{\rm sch}(t)=\left\|\Phi(t,\cdot)-e^{-it/\eps^2}
\phi_e(t,\cdot){\bf e}_1-e^{it/\eps^2} \phi_p(t,\cdot){\bf e_2}\right\|_{L^2},\\
& E_{\rm pau}(t)=\left\|\Phi(t,\cdot)-e^{-it/\eps^2}
\Psi_e(t,\cdot)-e^{it/\eps^2} \Psi_p(t,\cdot)\right\|_{L^2}, \quad t\ge0.
\end{split}
\end{equation*}
Fig. \ref{fig:0} depicts the evolution of the errors $E_{\rm sch}(t)$
and $E_{\rm pau}(t)$, and we can conclude that the limiting model of
the Schr\"odinger equation \eqref{eq:1st}
is linearly  accurate at $O(\eps)$, while the limiting model of the
Pauli-type equation \eqref{eq:2nd} is  quadratically  accurate at $O(\eps^2)$.
In particular, both the errors
$E_{\rm sch}(t)$ and $E_{\rm pau}(t)$ are observed to grow linearly in time, i.e.
\be
E_{\rm sch}(t)\leq (C_1+C_2t)\eps,\qquad E_{\rm pau}(t)\leq (C_3+C_4t)\eps^2, \quad t\ge0,
\ee
where $C_1$, $C_2$, $C_3$ and $C_4$ are positive constants independent of time $t\ge0$ and $\eps\in(0,1]$.
We find that \eqref{eq:2nd:eq} is the same second order approximate limiting model
as the Pauli equation \cite{N,NM} for the Dirac equation \eqref{SDEdd}
in the nonrelativistic limit regime.

\section{Conclusion}\label{sec:con}
A multiscale time integrator Fourier pseudospectral (MTI-FP) method
was proposed and rigorously analyzed for the Dirac equation
involving a  dimensionless parameter $\eps\in(0,1]$,
which is inversely proportional to the speed of light.
The main difficulty of the problem is that the solution
highly oscillates with $O(\eps^2)$ wavelength in time when $0<\eps\ll1$.
The key ideas in designing the MTI-FP method included a proper multiscale decomposition
of the Dirac equation and the use of the Gautschi type exponential wave integrator
in time discretization.
Rigorous error analysis showed that the MTI-FP method is uniformly convergent
in spatial  discretization with spectral accuracy,
and uniformly convergent in
temporal discretization with linear order for $\eps\in(0,1]$,
while the temporal accuracy is optimal with quadratic convergence rate
when either   $\eps=O(1)$ or $\eps\lesssim \tau$.
This result significantly improves the
error bounds of the existing numerical methods  for the Dirac
equation in the nonrelativistic limit regime.
Numerical results  confirmed the error estimates and suggested our
error bounds are sharp and optimal. Convergence rates of the Dirac
equation to its limiting first order Schr\"odinger equation model and
second order Pauli-type equation model  were observed numerically.

\end{document}